\documentclass[preprint,11pt,sort&compress,numbers]{elsarticle}

\usepackage{amsmath,amsthm,amsfonts,amssymb,latexsym,mathrsfs,color,cases,url,enumerate}

\newtheorem{thm}{Theorem}[section]
\newtheorem{lem}[thm]{Lemma}
\newtheorem{coro}[thm]{Corollary}
\newtheorem{prop}[thm]{Proposition}

\newtheorem*{PT}{Poincar\'e Theorem}
\newtheorem*{DJI}{Desnanot-Jacobi Determinant Identity}
\newtheorem*{PiT}{Pincherle Theorem}

\theoremstyle{definition}

\newtheorem{exm}[thm]{Example}

\newtheorem{rem}[thm]{Remark}

\newcommand{\lrf}[1]{\lfloor #1\rfloor}
\def\la{\lambda}

\def\b{\beta}%{\widetilde{b}}
\def\c{\gamma}%{\widetilde{b}}

\numberwithin{equation}{section}
%\numberwithin{figure}{section}

\journal{arXiv}%{LAA}

\usepackage{geometry}
\geometry{a4paper,left=2cm,right=2cm,top=3cm,bottom=3cm}

\begin{document}

\begin{frontmatter}

\title{Positivity problem of three-term recurrence sequences}
%{Positivity of Taylor coefficients of rational functions}
\author{Yanni Pei\corref{cor1}}
\ead{peiyanni@hotmail.com}
\author{Yaling Wang\corref{cor2}}
\ead{wang-yaling@hotmail.com}
\author{Yi Wang\corref{cor3}}
\ead{wangyi@dlut.edu.cn}
\cortext[cor3]{Corresponding author.}
\address{School of Mathematical Sciences, Dalian University of Technology, Dalian 116024, P.R. China}
\date{}
\begin{abstract}
We present some necessary and/or sufficient conditions for the positivity problem of three-term recurrence sequences.
As applications we show the positivity of diagonal Taylor coefficients of some rational functions
in a unified approach.
We also establish a criterion for the positivity and log-convexity of such sequences.
%diagonal coefficients of some rational functions in a unified approach.
\end{abstract}
\begin{keyword}
three-term recurrence sequence\sep
%positive sequence\sep
totally nonnegative matrix\sep
continued fraction\sep
log-convex sequence\sep
Ap\'ery-like number
\MSC[2010] 05A20\sep 15B48\sep 40A15\sep 39A21
\end{keyword}

\end{frontmatter}

\section{Introduction}

Let $(u_n)_{n\ge 0}$ be a sequence of real numbers satisfying the three-term recurrence relation
\begin{equation}\label{u-rr}
a(n)u_{n+1}=b(n)u_{n}-c(n)u_{n-1},\qquad n=1,2,\ldots,
\end{equation}
where $a(n),b(n),c(n)$ %are polynomials in $n$ and
take positive values for all $n\ge 1$.
We say also that $u_n$ is a {\it solution} of the difference equation \eqref{u-rr}.
  %the second-order linear homogeneous
  %the three-term difference equation \eqref{u-rr}.
The positivity problem naturally arises:
in which case, the three-term recurrence sequence is positive?
Such a problem is closely related to the total nonnegativity of matrices.
Following \cite{FJ11},
we say that a (finite or infinite) matrix is {\it totally nonnegative} (TN for short),
if its minors of all orders are nonnegative.
We have the following characterization.

\begin{thm}[Characterization]
\label{pp-TN}
Let $u_n$ be a solution of the difference equation \eqref{u-rr}. %$a(n)u_{n+1}=b(n)u_n-c(n)u_{n-1}$.
Then $(u_n)_{n\ge 0}$ is positive if and only if $u_0>0$ and the tridiagonal matrix
$$
M_0=\left(
    \begin{array}{ccccc}
      u_1 & c(1) &  &  &  \\
      u_0 & b(1) & c(2) &  &  \\
       & a(1) & b(2) & c(3) &  \\
       &  & a(2) & b(3) & \ddots \\
       &  &  & \ddots & \ddots \\
    \end{array}
  \right).
$$
is totally nonnegative.
\end{thm}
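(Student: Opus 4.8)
The plan is to observe that $M_0$ is itself a tridiagonal matrix: reading off the display, its main diagonal is $u_1,b(1),b(2),\dots$, its superdiagonal is $c(1),c(2),\dots$, and its subdiagonal is $u_0,a(1),a(2),\dots$. Let $D_k$ denote the determinant of the $k\times k$ leading principal submatrix of $M_0$. Expanding along the last row yields the three-term recursion $D_k = b(k-1)D_{k-1} - a(k-2)c(k-1)D_{k-2}$ for $k\ge 3$, together with $D_1 = u_1$ and $D_2 = b(1)u_1 - c(1)u_0$. Comparing with \eqref{u-rr} --- which at $n=1$ reads $a(1)u_2 = b(1)u_1 - c(1)u_0$ --- and arguing by induction, I would establish the key identity
\[
D_k \;=\; a(1)a(2)\cdots a(k-1)\,u_k, \qquad k\ge 1,
\]
the empty product being $1$. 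Since each $a(j)>0$, this says $D_k$ and $u_k$ always have the same sign; in particular all $D_k>0$ precisely when all $u_k>0$ for $k\ge1$.

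For the ``only if'' direction, assume $(u_n)_{n\ge0}$ is positive. Then $u_0>0$; the sub- and super-diagonal entries of $M_0$ are positive and its diagonal entries $u_1,b(1),b(2),\dots$ are positive, so $M_0$ has nonnegative entries; and by the key identity every $D_k$ is strictly positive. It then remains to invoke the classical fact that a finite tridiagonal matrix with nonnegative entries whose leading principal minors are all positive is totally nonnegative: such a matrix has a bidiagonal $LU$-factorization in which the diagonal of $U$ consists of the positive pivots $D_k/D_{k-1}$, the superdiagonal of $U$ is inherited unchanged from the matrix, and the subdiagonal entries of $L$ equal (a subdiagonal entry)$\,\cdot D_{k-2}/D_{k-1}\ge0$; both $L$ and $U$ are then bidiagonal with nonnegative entries, hence TN, and so is their product. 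Applying this to each finite leading principal submatrix of $M_0$ --- whose leading principal minors are exactly $D_1,\dots,D_N$ --- and using that any finite minor of $M_0$ lives inside such a submatrix, we conclude that $M_0$ is totally nonnegative.

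For the ``if'' direction, assume $u_0>0$ and $M_0$ is totally nonnegative. Leading principal minors are in particular minors of $M_0$, so $D_k\ge0$, hence $u_k\ge0$ for all $k\ge1$. To upgrade this to strict positivity, suppose $m\ge1$ were minimal with $u_m=0$; then $u_0,u_1,\dots,u_{m-1}>0$, and evaluating \eqref{u-rr} at $n=m$ gives $a(m)u_{m+1} = -c(m)u_{m-1}<0$, contradicting $u_{m+1}\ge0$. Hence $u_m>0$ for all $m\ge1$, and together with $u_0>0$ the sequence $(u_n)_{n\ge0}$ is positive.

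The computational core is the identity $D_k = a(1)\cdots a(k-1)u_k$, which is a routine induction. The step I would be most careful about is the implication ``positive leading principal minors $\Rightarrow$ totally nonnegative'' for the tridiagonal matrix: here the \emph{strict} positivity of the $D_k$ is genuinely used (mere nonnegativity would not suffice), and one must also take a little care in reducing the infinite matrix $M_0$ to its finite sections.
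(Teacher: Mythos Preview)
Your proof is correct and follows essentially the same approach as the paper: the key identity is that the leading principal minors of the tridiagonal matrix equal the $u_k$ up to a positive factor, after which one invokes the characterization of totally nonnegative tridiagonal matrices via positivity of leading principal minors. The only cosmetic differences are that the paper first normalizes $M_0$ to a matrix $M_1$ with $1$'s on the subdiagonal (so that its leading principal minors are exactly $u_n$) and then cites the result you sketch via $LU$-factorization as a black-box lemma from Minc, whereas you work with $M_0$ directly and supply the $LU$ argument and the strict-positivity upgrade by hand.
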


Our interest in the positivity problem of three-term recurrence sequences is motivated by
the positivity of diagonal Taylor coefficients of multivariate rational functions
(see Example \ref{dtc-exm}).
%(see \cite{Kau07,Pil19,SS14,Str08,SZ15} and Example \ref{dtc-exm}).
%we will present a simple and effective criterion for the positivity problem.
Some of diagonal coefficients are the so-called Ap\'ery-like numbers that satisfy three-term recurrence relations,
in which $a(n),b(n),c(n)$ are all quadratic polynomials in $n$
or are all cubic polynomials in $n$.
%(see Remark \ref{aln}).
%$(n+1)^2u_{n+1}=(rn^2+rn+s)u_n-tn^2u_{n-1}$ or
%$(n+1)^3u_{n+1}=(2n+1)(rn^2+rn+s)u_n-tn^3u_{n-1}$,
%with $u_0=1$ and $u_1=s$.

%Since our interest,
Throughout this paper,
we always assume that %the coefficients
$a(n),b(n),c(n)$ in \eqref{u-rr} are polynomials in $n$ with the same degree $\delta$ and
$$a(n)=an^\delta+a'n^{\delta-1}+\cdots,\quad
b(n)=bn^\delta+b'n^{\delta-1}+\cdots,\quad
c(n)=cn^\delta+c'n^{\delta-1}+\cdots,$$
where the leading coefficients $a,b,c$ are positive.

Following Elaydi \cite{Ela05},
%a nontrivial solution $u_n$ of %the three-term difference equation
a nontrivial solution $u_n$ of
\eqref{u-rr} is said to be {\it oscillatory} (around zero)
if for every positive integer $N$ there exists $n\ge N$ such that $u_nu_{n+1}\le 0$.
%Otherwise, the sequence is said to be {\it nonoscillatory}.
Otherwise, the solution is said to be {\it nonoscillatory}.
In other words,
a solution is nonoscillatory if it is {\it eventually sign-definite},
i.e., either eventually positive or eventually negative.
 %In other words,
 %$(u_n)$ is oscillatory if it is neither eventually positive nor eventually negative.
We say that a nontrivial solution $u^*_n$ is a {\it minimal solution} of \eqref{u-rr}
if $\lim_{n\rightarrow +\infty}u^*_n/u_n=0$ for arbitrary solution $u_n$ of \eqref{u-rr}
that is not a multiple of $u^*_n$.
Clearly, a minimal solution is unique up to multiplicity.
%As pointed out in \cite{Ela05},
Minimal solutions play a central role in the convergence of continued fractions
and the asymptotics of orthogonal polynomials.
%Throughout this paper
For convenience, we also write \eqref{u-rr} as %in the normal form
\begin{equation}\label{u-rr-1}
u_{n+1}=\b_nu_{n}-\c_nu_{n-1},\qquad n=1,2,\ldots
\end{equation}
where $\b_n=b(n)/a(n)$ and $\c_n=c(n)/a(n)$.
%Clearly, \eqref{u-rr-1} shares the same characteristic roots with \eqref{u-rr}.
For simplicity,
we denote the continued fraction in a compact form
\begin{equation}\label{cf}
\frac{\c_1}{\b_1-}\;\frac{\c_2}{\b_2-}\;\frac{\c_3}{\b_3-}\;\cdots
:=\cfrac{\c_1}{\b_1-\cfrac{\c_2}{\b_2-\cfrac{\c_3}{\b_3-\cdots}}}
\end{equation}

\begin{thm}[Necessity]
\label{nc-pp}
Let $(u_n)_{n\ge 0}$ be a solution of \eqref{u-rr}.
\begin{enumerate}[\rm (i)]
\item
If $(u_n)$ is eventually sign-definite, %eventually positive or eventually negative
then $b^2\ge 4ac$.
\item
If $(u_n)_{n\ge 0}$ is positive,
then the continued fraction \eqref{cf}
converges to a finite positive limit $\rho_0$ and
$u_1\ge\rho_0 u_0$.
Moreover,
the solution $(u^*_n)_{n\ge 0}$ of \eqref{u-rr} decided by $u^*_0=1$ and $u^*_1=\rho_0$
is a positive and minimal solution of \eqref{u-rr}.
\end{enumerate}
\end{thm}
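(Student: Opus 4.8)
My plan is to prove the two statements by quite different arguments: part (i) by analysing the ratio sequence $r_n=u_{n+1}/u_n$, and part (ii) by building the minimal solution as a limit of ``finite-horizon'' solutions and then invoking the Pincherle Theorem. For part (i), replacing $(u_n)$ by $(-u_n)$ if necessary I may assume $u_n>0$ for all $n\ge N$, and I put $r_n:=u_{n+1}/u_n>0$. Dividing \eqref{u-rr-1} by $u_n$ gives $r_n=\b_n-\c_n/r_{n-1}$ for $n>N$. Then $r_n<\b_n$ shows $(r_n)$ is bounded above (since $\b_n\to b/a$), while $r_n>0$ forces $r_{n-1}>\c_n/\b_n$, so $\liminf_n r_n\ge c/b>0$; hence $M:=\limsup_n r_n\in(0,\infty)$. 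Choosing $n_k\to\infty$ with $r_{n_k}\to M$ and, after passing to a subsequence, $r_{n_k-1}\to\ell$ (necessarily with $0<\ell\le M$), the relation $r_n=\b_n-\c_n/r_{n-1}$ yields $M=b/a-(c/a)/\ell$, so $b/a-M=(c/a)/\ell>0$ and $\ell=(c/a)/(b/a-M)\le M$; therefore $c/a\le M(b/a-M)\le b^2/(4a^2)$, that is, $b^2\ge4ac$.

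For part (ii) assume $u_n>0$ for all $n\ge0$. For each $N\ge1$ let $(u^{(N)}_n)_{0\le n\le N+1}$ be the solution of \eqref{u-rr} obtained by running the recurrence backwards from $u^{(N)}_{N+1}=0$ and $u^{(N)}_N=1$. The engine is the Casoratian identity: for solutions $(x_n),(y_n)$ of \eqref{u-rr-1}, the quantity $\mathcal W_n:=x_ny_{n-1}-x_{n-1}y_n$ satisfies $\mathcal W_{n+1}=\c_n\mathcal W_n$ and hence keeps a constant sign. Applied to the pair $(u^{(N)},u)$, whose Casoratian at $n=N+1$ equals $u^{(N)}_{N+1}u_N-u^{(N)}_Nu_{N+1}=-u_{N+1}<0$, it shows that $n\mapsto u^{(N)}_n/u_n$ is strictly decreasing; since this ratio is $0$ at $n=N+1$ it is positive for $0\le n\le N$, so $u^{(N)}_n>0$ there and $u^{(N)}_1/u^{(N)}_0<u_1/u_0$. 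Applied to the pair $(u^{(N)},u^{(N+1)})$, whose Casoratian at $n=N+1$ equals $-u^{(N+1)}_{N+1}=-1<0$, it shows that $n\mapsto u^{(N)}_n/u^{(N+1)}_n$ is strictly decreasing, whence $u^{(N)}_n/u^{(N)}_0<u^{(N+1)}_n/u^{(N+1)}_0$ for $1\le n\le N+1$.

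Next I identify the partial ratios with the convergents of \eqref{cf}. Setting $\rho^{(N)}_k:=u^{(N)}_{k+1}/u^{(N)}_k$, the recurrence gives $\rho^{(N)}_{k-1}=\c_k/(\b_k-\rho^{(N)}_k)$ for $1\le k\le N$, together with $\rho^{(N)}_N=0$; unwinding these from $k=N$ down to $k=1$ shows that $\rho^{(N)}_0$ equals the $N$-th convergent of \eqref{cf}. By the monotonicity just established, $(\rho^{(N)}_0)_{N\ge1}$ is strictly increasing and bounded above by $u_1/u_0$, so it converges to a finite limit $\rho_0$, which is positive because $\rho^{(1)}_0=\c_1/\b_1>0$. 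Thus \eqref{cf} converges to the finite positive number $\rho_0$, and, since $\rho^{(N)}_0<u_1/u_0$, also $u_1\ge\rho_0u_0$.

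It remains to treat the distinguished solution. For each fixed $n$, the sequence $(u^{(N)}_n/u^{(N)}_0)_{N\ge n}$ is increasing, positive, and bounded above by $u_n/u_0$ (by the two comparisons above), hence converges to a number $u^*_n$. Because $u^{(N)}_n/u^{(N)}_0$ is the value at $n$ of the solution of \eqref{u-rr} with $y_0=1$, $y_1=\rho^{(N)}_0$, and solutions depend linearly---hence continuously---on their initial data, $(u^*_n)_{n\ge0}$ is precisely the solution with $u^*_0=1$, $u^*_1=\rho_0$; moreover $u^*_n\ge u^{(n)}_n/u^{(n)}_0=1/u^{(n)}_0>0$, so it is positive. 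Finally, since \eqref{cf} converges, the Pincherle Theorem supplies a minimal solution $(v_n)$ with $v_0\neq0$ (else $v_1/v_0$ could not equal the finite value $\rho_0$) and $v_1/v_0=\rho_0$; then $(v_n/v_0)$ is a minimal solution with $y_0=1$, $y_1=\rho_0$, so it coincides with $(u^*_n)$, which is therefore minimal. I expect the only genuine bookkeeping to lie in the sign analysis of the two Casoratian comparisons (tracking the common index ranges and where each sequence is known to be positive) and in the identification of $\rho^{(N)}_0$ as the $N$-th convergent of \eqref{cf}; with the Pincherle Theorem in hand, minimality then costs nothing, and part (i) is independent of part (ii).
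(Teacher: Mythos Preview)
Your proof is correct, and both parts take genuinely different routes from the paper's. For part~(i) the paper works through total nonnegativity: positivity of $(u_n)$ makes the tridiagonal matrix $M_0$ TN, and then one shows that the constant-coefficient minors $D_k$ (the leading coefficients of the polynomial minors $D_k(n)$ of $M_0$) are all nonnegative, which is equivalent to $(c,b,a)$ being PF, i.e.\ $b^2\ge 4ac$. You instead argue analytically on the ratio sequence $r_n=u_{n+1}/u_n$: boundedness, a subsequence realising $M=\limsup r_n$, and the limiting relation $M=b/a-(c/a)/\ell$ with $\ell\le M$ give $c/a\le M(b/a-M)\le b^2/(4a^2)$. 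Your route is more elementary and self-contained; the paper's route highlights the structural link to TN matrices that the rest of the paper exploits. For part~(ii) the paper proves convergence of the continued fraction by applying the Desnanot--Jacobi identity to principal minors of the TN matrix $J_0$ (obtaining monotonicity of $u_{0,n}/u_{1,n}$), then reads off positivity of the tails $\rho_i$ and invokes Pincherle. You instead build the minimal solution constructively via the backward ``finite-horizon'' solutions $u^{(N)}$, using two Casoratian comparisons (with $u$ and with $u^{(N+1)}$) to get positivity, the bound $\rho_0^{(N)}<u_1/u_0$, and monotonicity in $N$; the identification of $\rho_0^{(N)}$ with the $N$th convergent is then immediate from the backward recursion, and Pincherle closes the argument. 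Both approaches end at Pincherle; yours avoids TN matrices and determinant identities entirely, at the cost of the explicit monotone-minor picture that the paper develops.
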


\begin{thm}[Sufficiency]\label{sc-epp}
If $b^2>4ac$,
then each nontrivial solution $(u_n)$ of \eqref{u-rr} is eventually sign-definite.
\end{thm}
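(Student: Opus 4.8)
The plan is to reduce the statement to the classical Poincar\'e theorem applied to the normalized recurrence \eqref{u-rr-1}. Since $a(n),b(n),c(n)$ are polynomials in $n$ of the same degree with positive leading coefficients $a,b,c$, the coefficients of \eqref{u-rr-1} converge,
\[
\beta_n=\frac{b(n)}{a(n)}\longrightarrow\frac{b}{a},\qquad \gamma_n=\frac{c(n)}{a(n)}\longrightarrow\frac{c}{a}\qquad(n\to\infty),
\]
so the limiting characteristic equation is $at^2-bt+c=0$, equivalently $t^2-(b/a)t+c/a=0$, with discriminant $b^2-4ac$.

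The first step is to examine the roots of this equation under the hypothesis $b^2>4ac$. A strictly positive discriminant produces two distinct real roots $\lambda_1>\lambda_2$, and since their sum $b/a$ and product $c/a$ are both positive (recall $a,b,c>0$), we in fact have $\lambda_1>\lambda_2>0$. In particular $|\lambda_1|\ne|\lambda_2|$, i.e.\ the two characteristic roots are distinct in modulus; moreover $\gamma_n>0$ for every $n$, so the recurrence does not degenerate. These are exactly the hypotheses under which Poincar\'e's theorem applies.

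The second step is to invoke the Poincar\'e theorem: for every nontrivial solution $(u_n)$ of \eqref{u-rr-1} one has $u_n\ne0$ for all large $n$ and
\[
\lim_{n\to\infty}\frac{u_{n+1}}{u_n}=\lambda_j\qquad\text{for some }j\in\{1,2\}.
\]
Because $\lambda_j>0$, there is an $N$ with $u_n\ne0$ and $u_{n+1}/u_n>0$, hence $u_nu_{n+1}=u_n^2\cdot\bigl(u_{n+1}/u_n\bigr)>0$, for all $n\ge N$. Therefore $(u_n)$ cannot be oscillatory, i.e.\ it is eventually sign-definite, which is the assertion.

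I expect the only point that needs care to be the verification that the two characteristic roots are \emph{strictly} separated in modulus: this is what guarantees that $\lim u_{n+1}/u_n$ actually exists (rather than merely $\limsup$/$\liminf$ bounds), and it is here that $b^2>4ac$, rather than the weaker $b^2\ge4ac$ of Theorem~\ref{nc-pp}, is used. Once this is established, it is irrelevant which of $\lambda_1,\lambda_2$ the ratio tends to — only that both are positive. If one preferred to avoid invoking the Poincar\'e theorem, the substitute would be a direct analysis of the Riccati-type iteration $r_n=\beta_n-\gamma_n/r_{n-1}$ for the ratios $r_n=u_{n+1}/u_n$, showing that $r_n$ is eventually confined to a small neighbourhood of one of the $\lambda_j$, which amounts to proving the relevant special case of the Poincar\'e theorem by hand.
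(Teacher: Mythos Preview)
Your proposal is correct and follows essentially the same route as the paper: verify that the recurrence is of Poincar\'e type with characteristic roots $0<\lambda_1<\lambda_2$ (in the paper's ordering; you have them reversed, which is harmless), invoke the Poincar\'e theorem to get $u_{n+1}/u_n\to\lambda_j>0$, and conclude eventual sign-definiteness. Your write-up is in fact more careful than the paper's three-line proof, spelling out why both roots are positive and of distinct modulus and implicitly why the ``$u_n=0$ for all large $n$'' alternative in the Poincar\'e theorem is excluded for a nontrivial solution (indeed $\gamma_n>0$ forces two consecutive zeros to propagate backwards).
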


%In the case $b^2=4ac$,
%a solution of \eqref{u-rr} may be either eventually sign-definite or oscillatory.
%%See Example \ref{D=0}.
%For example, every solution of the difference equation $u_{n+1}=2u_n-u_{n-1}$ is eventually sign-definite
%since $u_n=n(u_1-u_0)+u_0$.
%On the other hand, the difference equation
%$nu_{n+1}=(2n-1)u_n-nu_{n-1}$ is oscillatory \cite[Exercise 7.1.3]{Ela05}.

Denote the characteristic polynomial of the difference equation \eqref{u-rr} by $Q(\la)=a\la^2-b\la+c$
%the discriminant by $\Delta=b^2-4ac$
and the characteristic roots by
\begin{equation*}\label{roots}
\la_1=\frac{b-\sqrt{b^2-4ac}}{2a},\quad
\la_2=\frac{b+\sqrt{b^2-4ac}}{2a}.
\end{equation*}
Denote
$Q_n(\la):=a(n)\la^2-b(n)\la+c(n)$. %, \Delta_n, \la_1^{(n)}$ and $\la_1^{(n)}$.
Then $Q_n(\la)=Q(\la)n^\delta+\cdots$.
Assume that $b^2>4ac$.
Then for $\la_1<\la_0<\la_2$, we have $Q(\la_0)<0$,
and so $Q_n(\la_0)<0$ for sufficiently large $n$.
%Conversely,
%if $Q_n(\la_0)\le 0$ for sufficiently large $n$,
%then $Q(\la_0)\le 0$,
%and so that %$\Delta\ge 0$ and
%$b^2\ge 4ac$ and $\la_1\le\la_0\le\la_2$.

\begin{thm}[Criterion]
\label{sc-pp}
Let $(u_n)_{n\ge 0}$ be a solution of \eqref{u-rr}.
Assume that these exists a positive number $\la_0$ such that
$Q_n(\la_0)\le 0$ for all $n\ge m$ and
$u_{m+1}\ge \la_0u_m>0$.
Then $(u_n)_{n\ge m}$ is positive.
\end{thm}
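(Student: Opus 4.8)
The plan is to prove, by induction on $n$, a statement slightly stronger than mere positivity: namely that $u_{n+1}\ge\la_0 u_n>0$ for every $n\ge m$. The base case $n=m$ is handed to us directly, since $u_{m+1}\ge\la_0 u_m>0$ is exactly the hypothesis. For the inductive step I would assume $u_{n+1}\ge\la_0 u_n>0$ for some $n\ge m$ and aim to conclude $u_{n+2}\ge\la_0 u_{n+1}>0$.

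The one computation I need is to recast the assumption $Q_{n+1}(\la_0)\le 0$. Writing it out, $a(n+1)\la_0^2-b(n+1)\la_0+c(n+1)\le 0$, and since $\la_0>0$ I can divide by $\la_0$ to get $b(n+1)-\la_0 a(n+1)\ge c(n+1)/\la_0$. Multiplying by $u_{n+1}>0$ and then using the inductive hypothesis $u_{n+1}\ge\la_0 u_n$ (legitimate because $c(n+1)>0$), I would obtain
$$\bigl(b(n+1)-\la_0 a(n+1)\bigr)u_{n+1}\ \ge\ \frac{c(n+1)}{\la_0}\,u_{n+1}\ \ge\ c(n+1)u_n.$$
Rearranged and combined with the recurrence \eqref{u-rr} at index $n+1$, namely $a(n+1)u_{n+2}=b(n+1)u_{n+1}-c(n+1)u_n$, this says precisely $a(n+1)u_{n+2}\ge\la_0 a(n+1)u_{n+1}$; dividing by $a(n+1)>0$ gives $u_{n+2}\ge\la_0 u_{n+1}$, and then $u_{n+1}>0$ together with $\la_0>0$ forces $u_{n+2}>0$. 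That closes the induction, and combined with $u_m>0$ from the base case it yields $u_n>0$ for all $n\ge m$, which is the claim.

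I do not anticipate a genuine analytic difficulty here; the whole content is in picking the right invariant. An induction on ``$u_n>0$'' by itself would fail, since $u_{n+1}>0$ and $u_n>0$ give no control over the sign of $b(n+1)u_{n+1}-c(n+1)u_n$. The linear comparison $u_{n+1}\ge\la_0 u_n$ is what actually propagates, and $Q_{n+1}(\la_0)\le 0$ is exactly the hypothesis that makes this comparison survive one application of \eqref{u-rr}. The only bookkeeping to watch is that every step invokes $a(n+1),b(n+1),c(n+1)>0$, which is fine since $n+1\ge 1$ and positivity of the coefficients for all such indices is part of the standing setup around \eqref{u-rr}.
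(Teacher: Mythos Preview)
Your proof is correct and is essentially identical to the paper's: both strengthen the induction invariant from ``$u_n>0$'' to ``$u_{n+1}\ge\la_0 u_n>0$'' and then use $Q_n(\la_0)\le 0$ together with the recurrence to propagate this comparison. The paper compresses the inductive step into the single chain
\[
u_{n+1}=\frac{b(n)}{a(n)}u_n-\frac{c(n)}{a(n)}u_{n-1}
\ge\frac{b(n)}{a(n)}u_n-\frac{c(n)}{a(n)}\frac{u_n}{\la_0}
=\frac{b(n)\la_0-c(n)}{a(n)\la_0}\,u_n
\ge\la_0 u_n>0,
\]
which is your argument with the index shifted by one.
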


Clearly, $Q_n(\la_0)\le 0$ for all $n\ge m$
implies that $Q(\la_0)\le 0$,
and so that %$\Delta\ge 0$ and
$b^2\ge 4ac$ and $\la_1\le\la_0\le\la_2$.
In this sense, %sufficient conditions for the positivity in Theorem \ref{sc-pp} are ``almost" necessary.
the conditions in Theorem \ref{sc-pp} are ``almost" necessary.

This paper is organized as follows.
In \S 2,
we show Theorems \ref{pp-TN} and {\ref{nc-pp} by means of the total nonnegativity of tridiagonal matrices
and the theory of continued fractions.
In \S 3,
we first present the proofs of Theorems \ref{sc-epp} and \ref{sc-pp},
and then apply them to show the positivity of diagonal Taylor coefficients of some famous rational functions.
We also establish a criterion for the positivity and log-convexity of three-term recurrence sequences.
%As applications we show the log-convexity of some Ap\'ery-like numbers in a unified approach.
In \S 4,
we illustrate that the difference equation \eqref{u-rr} may be either oscillatory or nonoscillatory in the case $b^2=4ac$.
We also propose a couple problems for further work.

\section{Proof of Theorems \ref{pp-TN} and \ref{nc-pp}}%Necessary conditions}

Asymptotic behavior of solutions of second-order difference equations
has been extensively and deeply investigated
(see \cite[Chapter 8]{Ela05} for instance).
%\cite{BT33,Ela05,WZ85,WL92a,WL92b}.
%every homogeneous second-order difference equation gives rise to
%an associated tridiagonal matrix.
However, as will be seen below,
the total nonnegativity of (tridiagonal) matrices is a more natural approach to the positivity problem.
 %and continued fraction.
 %(see \cite[p. 399]{Ela05} for instance).
%We will prove Theorem \ref{nc-pp} (i)
%from the viewpoint of the total nonnegativity of matrices.

%\subsection{\bf Proof of Theorem \ref{nc-pp} (i)}

Following \cite{FJ11},
we say that a (finite or infinite) matrix is {\it totally nonnegative} (TN for short),
if its minors of all orders are nonnegative.
Let $(a_n)_{n\ge 0}$ be an infinite sequence of nonnegative numbers.
It is called a {\it P\'olya frequency} (PF for short) sequence
if the associated Toeplitz matrix
$$[a_{i-j}]_{i,j\ge 0}=\left[
\begin{array}{lllll}
a_{0} &  &  &  &\\
a_{1} & a_{0} & &\\
a_{2} & a_{1} & a_{0} &  &\\
a_{3} & a_{2} & a_{1} & a_{0} &\\
\vdots & & & & \ddots\\
\end{array}
\right]$$
is TN.
We say that a finite sequence $(a_0,a_1,\ldots, a_n)$ is PF
if the corresponding infinite sequence $(a_0,a_1,\ldots,a_n,0,0,\ldots)$ is PF.
%The fundamental representation theorem of Schoenberg and Edrei states
%that a sequence $(a_n)_{n\ge 0}$ of real numbers is PF if and only if its generating function has the form
%$$\sum_{n\ge 0}a_nz^n=\frac{\prod_{j\ge 1}(1+\alpha_jz)}{\prod_{j\ge 1}(1-\beta_jz)}e^{\gamma z}$$
%in some open disk centered at the origin,
%where $\alpha_j,\beta_j,\gamma\ge 0$ and $\sum_{j\ge 1}(\alpha_j + \beta_j) < +\infty$
%(see \cite[p. 412]{Kar68} for instance).
%In particular,
A classical result of Aissen, Schoenberg and Whitney states that
a finite sequence of nonnegative numbers is PF
if and only if its generating function has only real zeros
(see \cite[p. 399]{Kar68} for instance).
For example, the sequence $(r,s,t)$ of nonnegative numbers is PF if and only if $s^2\ge 4rt$.

To prove Theorem \ref{pp-TN},
we need the following result (see \cite[Example 2.2, p.149]{Min88} for instance).

\begin{lem}%[{\cite[Example 2.2, p.149]{Min88}}]
\label{irr-tp}
An irreducible nonnegative tridiagonal matrix is totally nonnegative
if and only if all its leading principal minors are positive.
\end{lem}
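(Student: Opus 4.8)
The plan is to reduce the statement to a standard sufficient condition for total nonnegativity, namely that a nonnegative matrix all of whose \emph{contiguous} (consecutive-row, consecutive-column) minors are nonnegative is TN. For a tridiagonal matrix, the key structural observation is that any minor indexed by rows $R$ and columns $C$ with $|R|=|C|=k$ either vanishes identically or else forces $R$ and $C$ to be intervals of consecutive indices that are ``aligned'' with each other (they differ by at most the bandwidth). First I would make this precise: if $A=(a_{ij})$ is tridiagonal and the submatrix $A[R\,|\,C]$ is taken with $R=\{r_1<\dots<r_k\}$, $C=\{c_1<\dots<c_k\}$, then expanding shows the determinant is zero unless $r_i$ and $c_i$ interlace tightly; a clean way to see this is that a tridiagonal matrix is a ``banded'' matrix of bandwidth $1$, and for banded matrices every nonzero minor is a product of contiguous minors coming from the diagonal blocks cut out by the gaps in $R$ and $C$. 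So it suffices to control the contiguous minors $A[i,i+1,\dots,i+k-1\,|\,j,j+1,\dots,j+k-1]$.

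Next I would dispose of the off-diagonal contiguous minors. If $|i-j|\ge 2$ then $A[i,\dots,i+k-1\,|\,j,\dots,j+k-1]$ contains a zero row or is outright zero by the band structure for $k=1$, and more generally such a block is (block) triangular with a trivial corner, so its determinant is $0$ or a product of super/sub-diagonal entries, all of which are nonnegative by hypothesis. The substantive case is $i=j$: the principal contiguous minors. Here I would argue that $\det A[i,i+1,\dots,i+k-1\,|\,i,\dots,i+k-1]$, the determinant of a \emph{trailing} tridiagonal block, is nonnegative. For an \emph{irreducible} nonnegative tridiagonal matrix this follows because every such trailing block is itself irreducible nonnegative tridiagonal, and the three-term recurrence satisfied by these nested determinants (the standard continuant recurrence $D_k = a_{i+k-1,i+k-1}D_{k-1} - a_{i+k-1,i+k-2}\,a_{i+k-2,i+k-1}D_{k-2}$) together with a comparison to the leading principal minors — which are positive by hypothesis — keeps them nonnegative. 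Concretely, one can relate a trailing minor to a ratio of leading minors via the hypothesis that all leading principal minors are positive, e.g. by an LU / Cholesky-type factorization of the (positive-definite-like) tridiagonal matrix, or by a direct induction on the recurrence.

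The main obstacle I anticipate is precisely this last point: passing from \emph{leading} principal minors being positive to \emph{all} contiguous principal minors (i.e. trailing and interior blocks) being nonnegative. The forward direction of the lemma (TN $\Rightarrow$ leading principal minors positive) is almost immediate — TN gives nonnegativity, and irreducibility plus a zero leading minor would propagate to force a zero in a place that contradicts irreducibility — so the content is the converse, and there the irreducibility hypothesis is doing real work and must be used carefully. I would handle it by inducting on the matrix size: strip off the first row and column, observe that the Schur complement of the (invertible, by positivity of the first leading minor) top-left entry is again an irreducible nonnegative tridiagonal matrix whose leading principal minors are positive (they equal ratios of the original ones), apply the induction hypothesis to get that it is TN, and then reassemble. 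The bookkeeping of which minors of the Schur complement correspond to which minors of $A$, and checking that irreducibility is preserved under taking the Schur complement, is the part that needs care; this is routine but is where the proof actually lives, so I would cite \cite[Example 2.2, p.149]{Min88} rather than reproduce all of it.
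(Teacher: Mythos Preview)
The paper does not prove this lemma at all: it is quoted verbatim as a known result, with the parenthetical ``(see \cite[Example 2.2, p.149]{Min88} for instance)'' in place of a proof. Since you yourself close by saying you would cite exactly that reference ``rather than reproduce all of it,'' your proposal and the paper in fact agree on the operative step, and your sketch is strictly more than the paper offers.

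A brief comment on the sketch itself, since you went further than the paper. Your plan for the substantive direction (all leading principal minors positive $\Rightarrow$ TN) is sound: for a tridiagonal matrix every nonzero minor factors through contiguous blocks, and the off-diagonal contiguous blocks are triangular with nonnegative diagonal, so the problem does reduce to the interior principal blocks. Your Schur-complement induction works cleanly here because taking the Schur complement of the $(1,1)$ entry of a tridiagonal matrix only alters the next diagonal entry, so the complement stays irreducible nonnegative tridiagonal and its $k$th leading principal minor is the ratio of the $(k{+}1)$st and $1$st leading principal minors of the original --- hence still positive. That is a legitimate route, essentially the one in Minc.

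One caution on the ``easy'' forward direction. Your argument that TN plus irreducibility forces the leading principal minors to be \emph{strictly} positive is too quick in the finite case: the $2\times 2$ all-ones matrix is irreducible, nonnegative, tridiagonal and TN, yet its second leading principal minor is $0$. What rescues the equivalence in the paper's applications is that the matrices $M_0,M_1,J_i$ are \emph{infinite}: if some leading principal minor $u_n$ vanishes while $u_{n-1}>0$, the continuant recurrence gives $u_{n+1}=-\gamma_n u_{n-1}<0$, contradicting TN. So the strict positivity really comes from being able to look one step further, not from irreducibility alone. If you want your write-up to stand on its own for finite matrices, state the forward direction as ``all leading principal minors are nonnegative'' (which is immediate from TN), or add the infinite-matrix remark.
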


\begin{proof}[Proof of Theorem \ref{pp-TN}]
Let $(u_n)_{n\ge 0}$ be a solution of the difference equation \eqref{u-rr-1}.
Then $u_{n+1}=\b_nu_n-\c_nu_{n-1}$,
where $\b_n=b(n)/a(n)$ and $\c_n=c(n)/a(n)$.
Denote the infinite tridiagonal matrix
$$
M_1=\left(
    \begin{array}{ccccc}
      u_1 & \c_1 &  &  &  \\
      u_0 & \b_1 & \c_2 &  &  \\
       & 1 & \b_2 & \c_3 &  \\
       &  & 1 & \ddots & \ddots \\
       &  &  & \ddots & \ddots \\
    \end{array}
  \right).
$$
Then for $n\ge 1$,
the $n$th leading principal minor of $M_1$ is precisely $u_{n}$,
since they satisfy the same three-term recurrence relation.
So, if $u_0$ and $\gamma_n$ are positive for all $n\ge 1$,
then the sequence $(u_n)_{n\ge 1}$ is positive if and only if the tridiagonal matrix $M_1$ is totally nonnegative
by Lemma \ref{irr-tp}.
Clearly, $M_0$ is TN if and only if $M_1$ is TN.
%Then $u_{n}$ is precisely the $n$th leading principal minor of $M_0$ for $n\ge 1$.
%It is known that an irreducible nonnegative tridiagonal matrix is totally nonnegative
%if and only if all its leading principal minors are positive
%(see Minc's book \cite[Example 2.2, p.149]{Min88} for instance).
Hence the positivity of the sequence $(u_n)_{n\ge 1}$
is equivalent to the total nonnegativity of the tridiagonal matrix $M_0$.
This completes the proof of Theorem \ref{pp-TN}.
%characterization for the positivity problem of three-term recurrence sequences.
\end{proof}

There are characterizations for the total nonnegativity of tridiagonal matrices besides Lemma \ref{irr-tp}.

\begin{lem}[{\cite[Example 2.1, p.147]{Min88}}]\label{tri-tp}
A nonnegative tridiagonal matrix is totally nonnegative if and only if all its principal minors are nonnegative.
\end{lem}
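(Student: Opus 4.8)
The plan is to reduce the stated criterion — a nonnegative tridiagonal matrix is TN iff all its principal minors are nonnegative — to the irreducible case already handled by Lemma \ref{irr-tp}, exploiting the block structure that reducibility forces on a tridiagonal matrix. First I would dispose of the trivial direction: if a matrix is TN, then in particular all its minors are nonnegative, hence so are all its principal minors. The substance is the converse. So suppose $T=(t_{ij})$ is a nonnegative tridiagonal matrix all of whose principal minors are nonnegative; I want to show every minor of $T$ (of every order) is nonnegative.

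The key structural observation is that a tridiagonal matrix decomposes according to the zero entries on its sub- and super-diagonals. Precisely, if some superdiagonal entry $t_{k,k+1}=0$ or some subdiagonal entry $t_{k+1,k}=0$, one can still not quite split the matrix, but if \emph{both} $t_{k,k+1}=0$ and $t_{k+1,k}=0$, then $T$ is block-diagonal with blocks indexed by $\{1,\dots,k\}$ and $\{k+1,\dots,n\}$, each again tridiagonal. More useful here: consider an arbitrary minor $\det T[I\mid J]$ with $|I|=|J|=r$, $I=\{i_1<\dots<i_r\}$, $J=\{j_1<\dots<j_r\}$. Because $T$ is tridiagonal, the submatrix $T[I\mid J]$ is itself ``banded,'' and a standard fact (the Lindström–Gessel–Viennot lemma applied to the path graph, or a direct Laplace-expansion argument) shows that such a minor is nonzero only if the index sets are not too far apart; moreover when it is nonzero it factors, up to sign and up to an explicit product of off-diagonal entries, as a product of \emph{contiguous principal minors} of $T$ together with boundary correction terms. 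The cleanest route is: every nonzero minor of a tridiagonal matrix equals, up to sign, a product of the form $\bigl(\prod t_{\text{off-diag}}\bigr)\cdot\prod_\ell \det T[K_\ell\mid K_\ell]$ where the $K_\ell$ are intervals of consecutive indices — and the crucial point is that the overall sign works out to $+1$. Thus nonnegativity of all principal minors (applied to the contiguous ones $T[K_\ell\mid K_\ell]$) plus nonnegativity of the entries forces nonnegativity of the minor.

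Carrying this out in order: (1) Prove the trivial direction. (2) Establish the structural lemma that any minor $\det T[I\mid J]$ of a tridiagonal matrix is zero unless $I$ and $J$ ``interlace'' appropriately, and in the nonzero case derive the explicit factorization into off-diagonal entries times contiguous principal minors, keeping careful track of the sign. A clean way to see both the factorization and the sign is to interpret the minor via nonintersecting paths in the weighted path digraph underlying $T$: by Lindström's lemma the minor equals a signed sum over families of nonintersecting paths, and for a tridiagonal matrix the nonintersecting families (when they exist) are essentially forced, yielding a single positive term that is visibly a product of entries and contiguous principal minors. (3) Conclude: each such contiguous principal minor is $\ge 0$ by hypothesis and each entry is $\ge 0$, so the minor is $\ge 0$; hence $T$ is TN.

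The main obstacle is the sign bookkeeping in step (2): one must verify that the permutation-sign contributions in the Laplace/Lindström expansion of an off-diagonal band of a tridiagonal matrix combine to $+1$ rather than alternating, which is what makes ``all principal minors $\ge 0$'' suffice rather than merely ``all principal minors $\ge 0$ together with sign hypotheses on off-diagonal blocks.'' Everything else — the trivial direction, and the final assembly from the factorization — is routine. (Alternatively, if one is willing to invoke Lemma \ref{irr-tp} more directly, one can first reduce to the irreducible case by splitting off zero rows/columns and zero off-diagonal pairs, then note that for an irreducible nonnegative tridiagonal matrix the hypothesis ``all principal minors $\ge 0$'' together with irreducibility upgrades to ``all leading principal minors $> 0$'' unless the matrix is singular in a controlled way, at which point a limiting/perturbation argument reduces to Lemma \ref{irr-tp}; but the path-model argument above is cleaner and self-contained.)
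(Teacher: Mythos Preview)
The paper does not supply a proof of this lemma at all; it is quoted verbatim as \cite[Example 2.1, p.147]{Min88} and used as a black box. So there is no ``paper's own proof'' against which to compare your attempt.

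That said, your outline is essentially the standard argument and is sound. The structural fact you isolate --- that every minor $\det T[I\mid J]$ of a tridiagonal matrix is either zero or factors as a product of off-diagonal entries of $T$ times principal minors on \emph{contiguous} index blocks, with overall sign $+1$ --- is exactly the content of the lemma, and once that factorization is in hand the conclusion is immediate. One caution: appealing to Lindstr\"om--Gessel--Viennot is awkward here because the natural weighted digraph underlying $T$ (edges $i\to i-1,\ i,\ i+1$) is not acyclic, so the usual sign-cancellation mechanism of LGV does not apply off the shelf; also, the phrase ``yielding a single positive term'' is slightly misleading, since contiguous principal blocks themselves involve cancellation. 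The cleaner execution of your step~(2) is a direct induction via Laplace expansion along the first row of $T[I\mid J]$: tridiagonality forces at most two nonzero entries there, and in each surviving case the cofactor is (after peeling off a nonnegative entry or a contiguous principal block) a minor of the same type but smaller. This handles the sign bookkeeping without LGV. Your perturbation-to-irreducible alternative is also valid but, as you say, fussier.
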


\begin{lem}[{\cite[Theorem 4.3]{Pin10}}]\label{tri-tn}
A nonnegative tridiagonal matrix is totally nonnegative if and only if
all its principal minors containing consecutive rows and columns are nonnegative,
\end{lem}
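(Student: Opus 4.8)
The plan is to deduce Lemma \ref{tri-tn} from Lemma \ref{tri-tp}, so that the only real work is to bridge the gap between ``all principal minors on consecutive index sets'' and ``all principal minors''. The forward implication is immediate: if the tridiagonal matrix is totally nonnegative then every one of its minors is nonnegative, and in particular so is every principal minor indexed by a block of consecutive rows and columns. For the converse, suppose $T=(t_{ij})$ is a nonnegative tridiagonal matrix all of whose principal minors on sets of consecutive indices are nonnegative. By Lemma \ref{tri-tp}, it suffices to prove that \emph{every} principal minor of $T$ is nonnegative; this is what I would establish.

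The key observation is a block-diagonal factorisation. Given any index set $S$, write it as the disjoint union $S=S_1\cup S_2\cup\cdots\cup S_k$ of its maximal runs of consecutive integers, so that distinct $S_r$ are separated by at least one omitted index. Since $T$ is tridiagonal we have $t_{ij}=0$ whenever $|i-j|\ge 2$, and hence $t_{ij}=0$ for all $i\in S_r$, $j\in S_s$ with $r\ne s$. Therefore the principal submatrix $T[S]$ on rows and columns indexed by $S$ is block diagonal with diagonal blocks $T[S_1],\dots,T[S_k]$, which gives
$$\det T[S]=\prod_{r=1}^{k}\det T[S_r].$$
Each $S_r$ is a set of consecutive integers, so each factor on the right is a principal minor on consecutive rows and columns and is therefore nonnegative by hypothesis; hence $\det T[S]\ge 0$. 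Since $S$ was arbitrary, all principal minors of $T$ are nonnegative, and Lemma \ref{tri-tp} then yields that $T$ is totally nonnegative, completing the argument.

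I do not expect a genuine obstacle here: the entire content is the remark that separating $S$ into maximal consecutive runs forces any coupling between two different runs to occur through matrix entries at distance $\ge 2$, which vanish for a tridiagonal matrix. The one point to be careful about is simply the bookkeeping of the maximal runs and the block structure. (One could alternatively give a self-contained argument for arbitrary minors $T[I\mid J]$ via a similar factorisation into contiguous pieces, but routing through Lemma \ref{tri-tp} keeps the proof shortest.)
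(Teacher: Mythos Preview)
Your proof is correct. The paper itself does not prove this lemma at all; it simply cites it as \cite[Theorem~4.3]{Pin10} and moves on. Your argument supplies a clean self-contained derivation from Lemma~\ref{tri-tp}: decomposing an arbitrary index set $S$ into maximal consecutive runs $S_1,\dots,S_k$ and observing that tridiagonality forces $T[S]$ to be block diagonal with blocks $T[S_r]$ is exactly the right idea, and the determinant then factors as claimed. The only detail worth spelling out for a reader is why $t_{ij}=0$ for $i\in S_r$, $j\in S_s$ with $r\ne s$: since the runs are maximal and distinct, there is at least one omitted index between them, so $|i-j|\ge 2$ and the tridiagonal hypothesis applies. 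You already say this, so there is no gap.
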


We also refer the reader to \cite{CLW15ra,CLW15rm,LMW16,WZ16} for some criteria for the total nonnegativity of tridiagonal matrices.

%We are now in a position to prove Theorem \ref{nc-pp} (i).

\begin{proof}[\bf Proof of Theorem \ref{nc-pp} (i)]
Clearly, it suffices to consider the case the total sequence $(u_n)_{n\ge 0}$ is positive.
By Proposition \ref{pp-TN},
to prove Theorem \ref{nc-pp} (i),
it suffices to prove that the total nonnegativity of the matrix $M_0$ implies $b^2\ge 4ac$.
In other words,
we need to prove that the sequence $(c,b,a)$ is a P\'olya frequency sequence,
or equivalently, the tridiagonal matrix
%It suffices to show that the Toeplitz matrix
%the tridiagonal matrix
$$
\left(
    \begin{array}{ccccc}
      b & c &  &  &  \\
      a & b & c &  &  \\
       & a & b & c &  \\
       &  & a & b & \ddots \\
       &  &  & \ddots & \ddots \\
    \end{array}
  \right)
$$
is totally nonnegative.
By Lemma \ref{tri-tn},
it suffices to show that the determinants
$$D_k=
\det\left(
    \begin{array}{ccccc}
      b & c &  &  &  \\
      a & b & c &  &  \\
       & a & b & \ddots &  \\
       &  &\ddots &\ddots & c\\
       &  &  & a& b\\
    \end{array}
  \right)_{k\times k}
$$
are nonnegative for all $k\ge 1$.

Suppose the contrary and assume that $D_m<0$ for some $m\ge 1$.
Consider the determinants
$$
D_m(n)=\det\left(
    \begin{array}{ccccc}
      b(n+1) & c(n+2) &  &  &  \\
      a(n+1) & b(n+2) & c(n+3) &  &  \\
       & a(n+2) & b(n+3) & \ddots &  \\
       &  &\ddots &\ddots & c(n+m)\\
       &  &  & a(n+m-1) & b(n+m)\\
    \end{array}
  \right)_{m\times m}.
$$
Clearly, $D_m(n)\ge 0$ for all $n\ge 0$
since they are minors of the totally nonnegative matrix $M_0$.
%by the total nonnegativity of the matrix $M_0$.
On the other hand,
%Note that the determinant of the left hand side
note that
$D_m(n)$
are polynomials in $n$
of degree $m\delta$ with the leading coefficient $D_m$:
$$D_m(n)=D_mn^{m\delta}+\cdots.$$
It follows that $D_m(n)<0$ for sufficiently large $n$, a contradiction.

Thus $D_k\ge 0$ for all $k\ge 1$, as desired.
This completes the proof of Theorem \ref{nc-pp} (i).
%and $J$ is therefore totally nonnegative.
%Thus $b^2\ge 4ac$, as desired.
\end{proof}

%\subsection{\bf Proof of Theorem \ref{nc-pp} (ii)}

To prove Theorem \ref{nc-pp} (ii),
we need the following classical determinant evaluation rule.
%(see Zeilberger \cite{Zil97} for a combinatorial proof).

\begin{DJI}\label{dji}
Let the matrix $M=[m_{ij}]_{0\le i,j\le k}$. %be an $m\times m$ matrix.
Then $$\det M\cdot\det M^{0,k}_{0,k}=\det M_k^k\cdot\det M_0^0-\det M_0^k\cdot\det M_k^0,$$
where $M^I_J$ denote the submatrix obtained from $M$ by deleting those rows in $I$ and columns in $J$.
\end{DJI}

Let $\b=(\b_n)_{n\ge 0}$ and $\c=(\c_n)_{n\ge 1}$ be two sequences of positive numbers.
Denote
$$
J_i=\left(
    \begin{array}{ccccc}
      \b_i & \c_{i+1} &  &  &  \\
      1 & \b_{i+1} & \c_{i+2} &  &  \\
       & 1 & \b_{i+2} & \c_{i+3} &  \\
       &  & 1 & \b_{i+3} & \ddots \\
       &  &  & \ddots & \ddots \\
    \end{array}
  \right),\quad i=0,1,2,\ldots.
$$

\begin{lem}\label{td-cf}
If the tridiagonal matrix $J_0$ is totally nonnegative,
then the continued fraction
\begin{equation}\label{cf-0}
\b_0-%\cfrac{\c_1}{\b_1-\cfrac{\c_2}{\b_2-\cfrac{\c_3}{\b_3-\cdots}}}
\frac{\c_1}{\b_1-}\;\frac{\c_2}{\b_2-}\;\frac{\c_3}{\b_3-}\;\frac{\c_4}{\b_4-}\;\cdots
\end{equation}
is convergent. %to a nonnegative number.
\end{lem}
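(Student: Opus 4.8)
The plan is to connect the convergents of the continued fraction \eqref{cf-0} directly to the leading principal minors of $J_0$, and then exploit total nonnegativity to force convergence. Recall that the finite continued fraction
$$
\b_0-\frac{\c_1}{\b_1-}\;\frac{\c_2}{\b_2-}\;\cdots\;\frac{\c_n}{\b_n}
$$
equals $P_n/P_{n-1}$, where $P_n$ is the $(n+1)$th leading principal minor of $J_0$ (here $P_{-1}=1$, $P_0=\b_0$), since the $P_n$ satisfy exactly the three-term recurrence $P_n=\b_nP_{n-1}-\c_nP_{n-2}$ governing the tails of the continued fraction — this is the same observation used in the proof of Theorem \ref{pp-TN}. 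Thus the $n$th convergent of \eqref{cf-0} is $R_n:=P_n/P_{n-1}$, and the task reduces to showing that $(R_n)$ converges.

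First I would record the sign information coming from total nonnegativity of $J_0$: all leading principal minors $P_n$ are nonnegative, hence (as the minors containing consecutive top-left blocks) each $P_n>0$, so the ratios $R_n$ are well defined and positive. Next I would extract monotonicity. Using the Desnanot–Jacobi identity on the $(n{+}1)\times(n{+}1)$ leading submatrix $M$ of $J_0$, with the four corner-deleted minors, one gets a relation of the shape $P_nP_{n-2}-P_{n-1}^2 = -(\text{a nonnegative minor of }J_0)$, because the off-diagonal corner minors of a lower-Hessenberg tridiagonal matrix are products of superdiagonal entries $\c_i$ times a smaller leading minor, all nonnegative, while $\det M^{0,n}_{0,n}$ is again a leading principal minor. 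Concretely this yields $P_nP_{n-2}\le P_{n-1}^2$, i.e. $R_n\le R_{n-1}$: the convergents are nonincreasing. Since they are also positive, $(R_n)$ is a bounded monotone sequence and therefore converges; this proves \eqref{cf-0} is convergent.

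The main obstacle I anticipate is getting the Desnanot–Jacobi bookkeeping exactly right: one must verify that deleting the first and last rows/columns of the leading $(n{+}1)\times(n{+}1)$ block of $J_0$ really produces leading principal minors (shifted) and that the two ``mixed'' corner minors are nonnegative — for a tridiagonal matrix these mixed minors are block-triangular and evaluate to an explicit product of $\c_i$'s times a smaller minor of $J_0$, hence nonnegative by total nonnegativity, but this needs to be checked carefully rather than asserted. A clean alternative, which avoids Desnanot–Jacobi entirely, is to note that $R_n\le R_{n-1}$ is equivalent to $P_n P_{n-2}\le P_{n-1}^2$, i.e. to the log-concavity of $(P_n)$, and to derive this from the recurrence $P_n=\b_nP_{n-1}-\c_nP_{n-2}$ together with positivity of all $P_k$ and $\c_n$ by an elementary induction: $P_nP_{n-2}=\b_nP_{n-1}P_{n-2}-\c_nP_{n-2}^2\le \b_nP_{n-1}P_{n-2}$, and comparing with $P_{n-1}^2=\b_{n-1}P_{n-1}P_{n-2}-\c_{n-1}P_{n-1}P_{n-3}$ reduces the claim to the inductive hypothesis after dividing through — I would present whichever of these two routes turns out to be shortest, but I expect the log-concavity route to be the more self-contained one to write down.
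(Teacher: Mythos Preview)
Your identification of the $n$th convergent with $P_n/P_{n-1}$ is incorrect, and this is the fatal gap. The finite continued fraction
\[
\b_0-\frac{\c_1}{\b_1-}\;\cdots\;\frac{\c_n}{\b_n}
\]
equals $A(n)/B(n)$, where $A(n)$ and $B(n)$ \emph{both} satisfy the recurrence $X_k=\b_kX_{k-1}-\c_kX_{k-2}$ but with different initial data: $A(-1)=1,\,A(0)=\b_0$ versus $B(-1)=0,\,B(0)=1$. Your $P_n$ is $A(n)$, so $P_n/P_{n-1}=A(n)/A(n-1)$, not $A(n)/B(n)$. Already at $n=1$ one has $P_1/P_0=\b_1-\c_1/\b_0$, whereas the first convergent is $\b_0-\c_1/\b_1$. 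In fact $B(n)$ is the $n$th leading principal minor of $J_1$, not of $J_0$.

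The second claim, that Desnanot--Jacobi yields $P_nP_{n-2}\le P_{n-1}^2$, is also false in general. Applied to the leading $(n{+}1)\times(n{+}1)$ block of $J_0$, Desnanot--Jacobi gives
\[
u_{0,n}\,u_{1,n-1}=u_{0,n-1}\,u_{1,n}-\c_1\cdots\c_n,
\]
which relates $A$ to $B$, not $P_n$ to $P_{n-2}$. A concrete counterexample: take $\b_0=1$, $\b_k=100$ for $k\ge 1$, $\c_k=1$. Then $J_0$ is TN, yet $P_0=1$, $P_1=99$, $P_2=9899$, so $P_0P_2=9899>9801=P_1^2$; the sequence $(P_n)$ is log-\emph{convex} here, and $P_n/P_{n-1}$ increases rather than decreases. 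The paper's argument instead uses the identity above (for general $i$) to show that $u_{i,n}/u_{i+1,n}$ is decreasing in $n$, hence $A(n)/B(n)=u_{0,n}/u_{1,n}$ converges. Your instinct to use Desnanot--Jacobi together with positivity of minors is exactly right; you just need to track the correct pair of minor sequences.
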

\begin{proof}
Let $A(n)$ and $B(n)$ be the $n$th partial numerator and
the $n$th partial denominator of the continued fraction \eqref{cf-0}.
Then we have
\begin{eqnarray*}
  A(n)=\b_nA(n-1)-\c_nA(n-2),&& A(-1)=1,\ A(0)=\b_0; \\
  B(n)=\b_nB(n-1)-\c_nB(n-2),&& B(-1)=0,\ B(0)=1
\end{eqnarray*}
by the fundamental recurrence formula for continued fractions
(see \cite[Theorem 9.2]{Ela05} for instance).
To show that the continued fraction \eqref{cf-0} is convergent,
it suffices to show that $A(n)/B(n)$ is convergent.

For $n\ge i\ge 0$, denote
$$u_{i,n}=
\det\left(
    \begin{array}{ccccc}
      \b_i & \c_{i+1} &  &  &  \\
      1 & \b_{i+1} & \c_{i+2} &  &  \\
       & 1 & \b_{i+2} & \ddots &  \\
       &  &\ddots &\ddots & \c_{n}\\
       &  &  & 1 & \b_{n}\\
    \end{array}
  \right).
$$
If $J_0$ is TN, then so is $J_i$ for each $i\ge 0$.
Thus $u_{i,n}>0$ by Lemma \ref{irr-tp}. %for $0\le i\le n$.

%the method of Dodgeson condensation
Applying the Desnanot-Jacobi determinant identity to the determinant $u_{i,n+1}$,
we obtain
$$u_{i,n+1}u_{i+1,n}=u_{i+1,n+1}u_{i,n}-\c_{i+1}\cdots \c_n.$$
It follows that $u_{i,n+1}u_{i+1,n}<u_{i+1,n+1}u_{i,n}$. %since all $\c$ are positive.
Thus $u_{i,n}/u_{i+1,n}$ is decreasing in $n$ and is therefore convergent.
Let $\lim_{n\rightarrow +\infty} u_{i,n}/u_{i+1,n}=\ell_i$.
Clearly, $\ell_i\ge 0$.
Note that $A(n)=u_{0,n}$ and $B(n)=u_{1,n}$.
Hence $A(n)/B(n)$ is convergent,
and $\lim_{n\rightarrow +\infty}A(n)/B(n)=\ell_0$.
%as desired.
\end{proof}

\begin{rem}\label{rem-cf}
We have showed that the continued fraction \eqref{cf-0} converges to $\ell_0$.
%As an immediate consequence,
%Similarly,
More generally, we have
\begin{equation}\label{cf-i}
\ell_i=\b_i-%\cfrac{\c_{i+1}}{\b_{i+1}-\cfrac{\c_{i+2}}{\b_{i+2}-\cdots}}
\frac{\c_{i+1}}{\b_{i+1}-}\;\frac{\c_{i+2}}{\b_{i+2}-}\;\frac{\c_{i+3}}{\b_{i+3}-}\;\frac{\c_{i+4}}{\b_{i+4}-}\;\cdots
\end{equation}
for $i\ge 0$.
Clearly, $\ell_i=\b_i-\frac{\c_{i+1}}{\ell_{i+1}}$.
Hence $\ell_{i+1}\neq 0$, and so $\ell_{i+1}>0$ for $i\ge 0$.
Denote
\begin{equation}\label{ri}
%\b_i-%\cfrac{\c_{i+1}}{\b_{i+1}-\cfrac{\c_{i+2}}{\b_{i+2}-\cdots}}
\rho_i=\frac{\c_{i+1}}{\b_{i+1}-}\;\frac{\c_{i+2}}{\b_{i+2}-}\;\frac{\c_{i+3}}{\b_{i+3}-}\;\frac{\c_{i+4}}{\b_{i+4}-}\;\cdots.
\end{equation}
Then $\rho_i=\frac{\c_{i+1}}{\ell_{i+1}}$.
Thus $\rho_i>0$ for $i\ge 0$.
On the other hand,
$\ell_i=\b_i-\rho_i$.
Hence $\b_0\ge\rho_0$ and $\b_{i+1}>\rho_{i+1}$ for $i\ge 0$.
\end{rem}

The following classic result was given by Pincherle in his fundamental work on continued fractions
(see \cite[Theorem 9.5]{Ela05} for instance).

\begin{PiT}
The continued fraction
%\begin{equation*}\label{cf}
$$\frac{\c_1}{\b_1-}\;\frac{\c_2}{\b_2-}\;\frac{\c_3}{\b_3-}\;\frac{\c_4}{\b_4-}\;\cdots$$
%\end{equation*}
converges if and only if
the difference equation $u_{n+1}=\b_nu_n-\c_nu_{n-1}$ %\eqref{u-rr-1}
has a minimal solution $u^*_n$ with $u^*_0=1$.
In case of convergence, moreover, one has
\begin{equation*}\label{cf-n}
\frac{u^*_{n+1}}{u^*_{n}}=
\frac{\c_{n+1}}{\b_{n+1}-}\;\frac{\c_{n+2}}{\b_{n+2}-}\;\frac{\c_{n+3}}{\b_{n+3}-}\;\frac{\c_{n+4}}{\b_{n+4}-}\;\cdots.
\end{equation*}
\end{PiT}

%We are now in a position to prove Theorem \ref{nc-pp} (ii).

\begin{proof}[\bf Proof of Theorem \ref{nc-pp} (ii)]

Let $(u_n)_{n\ge 0}$ be a positive solution of the difference equation
$u_{n+1}=\b_n u_n-\c_n u_{n-1}$ and $\b_0=u_1/u_0$.
Then the tridiagonal matrix $J_0$ is totally nonnegative.
By Remark \ref{rem-cf}, we have $\b_0\ge\rho_0>0$,
and so $u_1\ge\rho_0 u_0$.

On the other hand,
we have $u^*_{n+1}=\rho_{n}u^*_{n}$ by Lemma \ref{td-cf} and Pincherle Theorem,
and $\rho_n>0$ again by Remark \ref{rem-cf}.
Thus the solution $(u^*_n)_{n\ge 0}$ of \eqref{u-rr} decided by $u^*_0=1$ and $u^*_1=\rho_0$
is a positive and minimal solution of \eqref{u-rr}.
This completes the proof of Theorem \ref{nc-pp} (ii).
\end{proof}

\section{Proofs and applications of Theorems \ref{sc-epp} and \ref{sc-pp}}%Sufficient conditions}

%Following \cite{Ela05},
We say that \eqref{u-rr} is a difference equation of {\it Poincar\'e type}
in the sense that both the sequences $b(n)/a(n)$ and $c(n)/a(n)$ have finite limit.
The following Poincar\'e theorem
marks the beginning of research in the qualitative theory of linear difference equations
(see \cite[Theorem 8.9]{Ela05} for instance).

\begin{PT}
%Let $u_n$ be a solution of the difference equation \eqref{u-rr} of Poincar\'e type.
%Suppose that the characteristic roots have distinct moduli.
Suppose that \eqref{u-rr} is a difference equation  of Poincar\'e type
and that the characteristic roots have distinct moduli.
If $u_n$ is a solution of \eqref{u-rr},
then either $u_n=0$ for all large $n$, or
$\lim_{n\rightarrow +\infty}\frac{u_{n+1}}{u_n}=\la_i$
for some characteristic root $\la_i$.
\end{PT}

\begin{proof}[\bf Proof of Theorem \ref{sc-epp}]
By Poincar\'e theorem,
$u_{n+1}/u_n\rightarrow \la_i$ for some $i$.
Now $0<\la_1<\la_2$. %by $b^2>4ac$.
Hence there exists a positive integer $N$ such that $u_{n+1}/u_n>0$ for $n\ge N$.
The sequence $(u_n)$ is therefore sign-definite.
\end{proof}

\begin{proof}[\bf Proof of Theorem \ref{sc-pp}]
Assume that $u_n\ge \la_0u_{n-1}>0$.
Then by \eqref{u-rr},
$$u_{n+1}=\frac{b(n)}{a(n)}u_n-\frac{c(n)}{a(n)}u_{n-1}
\ge\frac{b(n)}{a(n)}u_n-\frac{c(n)}{a(n)}\frac{u_n}{\la_0}
=\left[\frac{b(n)\la_0-c(n)}{a(n)\la_0}\right]u_n
\ge\la_0u_n>0.$$
Thus $(u_n)_{n\ge m}$ is positive by induction.
\end{proof}

%Two particular interest special cases of Theorem \ref{sc-pp} are the following.
A particular interest special case of Theorem \ref{sc-pp} is the following.
%The following are some special cases of Theorem \ref{sc-pp}.

\begin{coro}\label{=1}
If $b(n)\ge a(n)+c(n)$ for all $n\ge 1$ and $u_1\ge u_0>0$,
then $(u_n)_{n\ge 0}$ is positive.
\end{coro}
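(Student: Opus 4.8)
The plan is to deduce this directly from Theorem \ref{sc-pp} by making the canonical choice $\lambda_0 = 1$. First I would observe that the hypothesis $u_1 \ge u_0 > 0$ is exactly the instance $u_{m+1} \ge \lambda_0 u_m > 0$ of the criterion with $m = 1$ and $\lambda_0 = 1$, since $u_1 \ge 1 \cdot u_0 = u_0$. (Here $m=1$ is legitimate because the recurrence \eqref{u-rr} is only imposed for $n \ge 1$, so the relevant index range is $n \ge 1$.) It then remains to verify the other hypothesis of Theorem \ref{sc-pp}, namely $Q_n(\lambda_0) \le 0$ for all $n \ge m$, with $\lambda_0 = 1$ and $m = 1$.

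Next I would compute $Q_n(1) = a(n)\cdot 1^2 - b(n)\cdot 1 + c(n) = a(n) + c(n) - b(n)$. The assumption $b(n) \ge a(n) + c(n)$ for all $n \ge 1$ says precisely that $a(n) + c(n) - b(n) \le 0$, i.e. $Q_n(1) \le 0$, for all $n \ge 1$. So both hypotheses of Theorem \ref{sc-pp} hold with $\lambda_0 = 1$ and $m = 1$, and the theorem yields that $(u_n)_{n \ge 1}$ is positive. Since $u_0 > 0$ by assumption as well, the full sequence $(u_n)_{n \ge 0}$ is positive, which is the claim.

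There is essentially no obstacle here: the corollary is a transparent specialization, and the only point requiring any care is bookkeeping about the index at which things start — making sure $m = 1$ is allowed and that the positivity conclusion for $n \ge 1$ combines with the separately assumed $u_0 > 0$ to give positivity for all $n \ge 0$. In writing this up I would keep it to two or three lines: set $\lambda_0 = 1$, note $Q_n(1) = a(n) - b(n) + c(n) \le 0$ by hypothesis, note $u_1 \ge u_0 > 0$, invoke Theorem \ref{sc-pp} with $m = 1$, and add the trivial remark about $u_0$.
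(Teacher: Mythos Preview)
Your approach is exactly the paper's: take $\lambda_0=1$ in Theorem~\ref{sc-pp}. One small bookkeeping slip---matching $u_1\ge u_0>0$ to $u_{m+1}\ge\lambda_0 u_m>0$ forces $m=0$, not $m=1$ (with $m=1$ the hypothesis would read $u_2\ge u_1>0$); with $m=0$ the induction in the proof of Theorem~\ref{sc-pp} only ever uses $Q_n(1)\le 0$ for $n\ge 1$, so everything goes through as you describe.
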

\begin{proof}
%The condition $b(n)\ge a(n)+c(n)$ is equivalent to $Q_n(1)\le 0$,
%and so $\la_1\le 1\le\la_2$.
The statement follows from Theorem \ref{sc-pp} by taking $\la_0=1$.
\end{proof}

%When the coefficients in \eqref{u-rr} is linear function in $n$.
%It is often convenience to take $\la_0=\la_1$,
A preferred candidate for $\la_0$ in Theorem \ref{sc-pp} is $\la_1$.
Note that
$Q_n(\la_1)$ is a polynomial in $n$ of degree less than $\delta$ and is easier to estimate.

\begin{coro}\label{1-rr}
Suppose that
$(an+a_0)u_{n+1}=(bn+b_0)u_{n}-(cn+c_0)u_{n-1}$.
Then $(u_n)_{n\ge 0}$ is positive if
$b^2\ge 4ac$,
$a_0\la_1^2-b_0\la_1+c_0\le 0$, and
$u_1\ge\la_1 u_0$.
%\quad where $\la_1=\frac{b-\sqrt{b^2-4ac}}{2a}$.
\end{coro}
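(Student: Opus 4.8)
The plan is to derive Corollary \ref{1-rr} directly from Theorem \ref{sc-pp} by taking $\la_0=\la_1$, the smaller characteristic root of $Q(\la)=a\la^2-b\la+c$. Since $b^2\ge 4ac$ and $a,b,c>0$, the root $\la_1=(b-\sqrt{b^2-4ac})/(2a)$ is a well-defined positive real number, so it is a legitimate candidate for $\la_0$. The task then reduces to verifying the two hypotheses of Theorem \ref{sc-pp} with $m=0$: namely that $Q_n(\la_1)\le 0$ for all $n\ge 0$, and that $u_1\ge\la_1 u_0>0$. The second of these is exactly the assumption $u_1\ge\la_1 u_0$ (together with $u_0>0$, which we may assume after possibly rescaling, or take as implicit in the corollary's setup).

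For the first hypothesis, I would compute $Q_n(\la_1)$ explicitly. Here $a(n)=an+a_0$, $b(n)=bn+b_0$, $c(n)=cn+c_0$, so
\begin{equation*}
Q_n(\la_1)=(an+a_0)\la_1^2-(bn+b_0)\la_1+(cn+c_0)=\bigl(a\la_1^2-b\la_1+c\bigr)n+\bigl(a_0\la_1^2-b_0\la_1+c_0\bigr).
\end{equation*}
By definition of $\la_1$ as a root of $Q$, the coefficient of $n$ vanishes: $a\la_1^2-b\la_1+c=Q(\la_1)=0$. Hence $Q_n(\la_1)=a_0\la_1^2-b_0\la_1+c_0$ is independent of $n$, and the hypothesis $a_0\la_1^2-b_0\la_1+c_0\le 0$ gives precisely $Q_n(\la_1)\le 0$ for every $n\ge 1$ (indeed for all $n$).

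With both conditions of Theorem \ref{sc-pp} (applied at $m=0$) in hand, that theorem yields that $(u_n)_{n\ge 0}$ is positive, which is the claim. I do not anticipate a genuine obstacle here: the only subtlety is bookkeeping — checking that $\la_1$ is real and positive (which follows from $b^2\ge 4ac$ and positivity of $a,b,c$), and observing the cancellation of the degree-one term in $Q_n(\la_1)$ that makes the remaining condition a single inequality on the constant terms. The corollary is essentially a worked instance of the ``almost necessary'' criterion advertised after Theorem \ref{sc-pp}, specialized to linear coefficient polynomials where $Q_n(\la_1)$ collapses to a constant.
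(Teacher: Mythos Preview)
Your proposal is correct and follows exactly the same approach as the paper: take $\la_0=\la_1$ in Theorem~\ref{sc-pp} and observe that $Q_n(\la_1)=a_0\la_1^2-b_0\la_1+c_0$ is constant in $n$ because the linear term $(a\la_1^2-b\la_1+c)n$ vanishes. The paper's proof is simply a one-line version of what you wrote.
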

\begin{proof}
We have $Q_n(\la_1)=a_0\la_1^2-b_0\la_1+c_0$,
and so the statement follows from Theorem \ref{sc-pp} by taking $\la_0=\la_1$.
\end{proof}

%The following folklore result follows from
%Theorem \ref{nc-pp} (ii) and Theorem \ref{sc-pp}.
The following folklore result is an immediate consequence of
Theorem \ref{nc-pp} and Theorem \ref{sc-pp},
which can be found in \cite{HHH06} for instance.

\begin{coro}\label{cont}
Suppose that $au_{n+1}=bu_n-cu_{n-1}$,
where $a,b,c$ are positive number.
Then $(u_n)_{n\ge 0}$ is positive
if and only if $b^2\ge 4ac$ and $u_1\ge\la_1u_0>0$.
\end{coro}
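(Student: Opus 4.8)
Corollary \ref{cont} is the constant-coefficient specialization in which $a(n)\equiv a$, $b(n)\equiv b$, $c(n)\equiv c$, so $\delta=0$ and $Q_n(\la)\equiv Q(\la)=a\la^2-b\la+c$. The plan is to get the two implications from the general machinery already in place. For the ``if'' direction I would invoke Corollary \ref{1-rr} with $a_0=b_0=c_0=0$: the hypotheses there become exactly $b^2\ge 4ac$, the (trivially satisfied) inequality $0\le 0$, and $u_1\ge\la_1 u_0$, so positivity follows at once; alternatively one applies Theorem \ref{sc-pp} directly with $m=0$ and $\la_0=\la_1$, noting $Q_n(\la_1)=Q(\la_1)=0\le 0$ for all $n$ since $\la_1$ is a root of $Q$.

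For the ``only if'' direction I would use Theorem \ref{nc-pp}. Positivity of $(u_n)$ forces $u_0>0$, and part (i) gives $b^2\ge 4ac$ (the sequence is certainly eventually sign-definite). Part (ii) gives that the continued fraction \eqref{cf} converges to a finite positive $\rho_0$ with $u_1\ge\rho_0 u_0$, so it remains to identify $\rho_0$ with $\la_1$. In the constant-coefficient case the continued fraction is the periodic one $\c/(\b-)\,\c/(\b-)\cdots$ with $\b=b/a$, $\c=c/a$, so its value $\rho_0$ satisfies the fixed-point equation $\rho_0=\c/(\b-\rho_0)$, i.e. $a\rho_0^2-b\rho_0+c=0$, whence $\rho_0\in\{\la_1,\la_2\}$; since the convergents decrease to their limit (this is the $u_{i,n}/u_{i+1,n}$ monotonicity from the proof of Lemma \ref{td-cf}, with $u_{i,n}/u_{i+1,n}\to\ell_i$ and $\rho_0=\c/\ell_1$), and since the first convergent equals $\c/\b<\la_2$ (as $\la_1<\la_2$ are the two roots of $a\la^2-b\la+c$ summing to $b/a$ and with $\la_1\la_2=c/a$, so $\c/\b=\la_1\la_2/(\la_1+\la_2)<\la_2$), we must have $\rho_0=\la_1$. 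Therefore $u_1\ge\la_1 u_0$, and combined with $b^2\ge 4ac$ and $u_0>0$ this is exactly the asserted condition.

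The main obstacle is the pinning-down of $\rho_0=\la_1$ rather than $\la_2$: the fixed-point equation alone admits both roots, so one genuinely needs a convergence/monotonicity argument to exclude $\la_2$ — most cleanly, that the even-indexed convergents form a decreasing sequence bounded below, so their limit cannot exceed the first convergent $\c/\b$, which is strictly below $\la_2$ whenever $\la_1<\la_2$; in the degenerate case $b^2=4ac$ one has $\la_1=\la_2$ and there is nothing to prove. Everything else is a direct citation of Theorems \ref{sc-pp} and \ref{nc-pp}. I would write the proof in two short paragraphs, one per implication, keeping the continued-fraction identification brief since the needed monotonicity is already recorded in the proof of Lemma \ref{td-cf}.
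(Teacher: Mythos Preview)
Your overall architecture matches the paper's exactly: the ``if'' part via Theorem~\ref{sc-pp} (or Corollary~\ref{1-rr}) with $\la_0=\la_1$, and the ``only if'' part via Theorem~\ref{nc-pp}, followed by identifying the periodic continued fraction $\rho_0$ with $\la_1$. The paper in fact does not justify $\rho_0=\la_1$ at all; it simply writes $\rho=\frac{\b-\sqrt{\b^2-4\c}}{2}=\la_1$ as a known evaluation of the periodic continued fraction. So your proposal is, if anything, more detailed than the paper on this point.

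However, your detailed argument for $\rho_0=\la_1$ contains an error. You assert that the convergents of $\rho_0=\frac{\c}{\b-}\frac{\c}{\b-}\cdots$ \emph{decrease} to their limit, and then use ``first convergent $\c/\b<\la_2$'' to exclude $\la_2$. In fact the convergents $p_n$ of this continued fraction \emph{increase}: one has $p_{n+1}=f(p_n)$ with $f(x)=\c/(\b-x)$ strictly increasing, $p_1=\c/\b=f(0)$, and $p_2=f(p_1)>f(0)=p_1$; inductively $p_{n+1}>p_n$. So ``decreasing from $\c/\b$'' is false and does not give the upper bound you need. (What \emph{does} decrease, per Lemma~\ref{td-cf}, is $u_{1,n}/u_{2,n}$, which converges to $\ell_1$; but $\rho_0=\c/\ell_1$, so the convergents of $\rho_0$ go the other way.)

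The correct exclusion of $\la_2$ is just as short: since $\la_1\la_2=\c$ and $\la_1+\la_2=\b$, one checks $p_1=\c/\b\le\la_1$, and $f$ maps $(0,\la_1]$ into itself (if $0<x\le\la_1$ then $\b-x\ge\la_2$ so $f(x)\le \c/\la_2=\la_1$). Hence all $p_n\le\la_1$, so the limit $\rho_0\le\la_1<\la_2$ when $b^2>4ac$, forcing $\rho_0=\la_1$; the case $b^2=4ac$ is trivial. Equivalently, via $\ell_1$: the ratios $D_n/D_{n-1}$ (with $D_n=u_{1,n}$) decrease, start at $\b>\la_2$, and stay above $\la_2$ since $g(x)=\b-\c/x$ maps $(\la_2,\infty)$ to itself; hence $\ell_1=\la_2$ and $\rho_0=\c/\la_2=\la_1$. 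With this fix your proof is complete and matches the paper's route.
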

\begin{proof}
The ``if" part follows from Theorem \ref{sc-pp}.
Now assume that $(u_n)_{n\ge 0}$ is positive.
Then $b^2\ge 4ac$ and $u_1\ge\rho u_0$ from Theorem \ref{nc-pp},
where $\b=b/a,\c=c/a$ and
$$\rho=%\cfrac{\c}{\b-\cfrac{\c}{\b-\cfrac{\c}{\b-\cdots}}}
\frac{\c}{\b-}\;\frac{\c}{\b-}\;\frac{\c}{\b-}\;\frac{\c}{\b-}\;\cdots.$$
It follows that $\rho=\frac{\beta-\sqrt{\beta^2-4\gamma}}{2}=\frac{b-\sqrt{b^2-4ac}}{2a}=\la_1$.
This completes the proof of the ``only if" part.
\end{proof}

%remark

\begin{exm}
Let $b,c>0$ and the ration function
$$\frac{1}{1-bx+cx^2}=\sum_{n\ge 0}u_nx^n.$$
Then $u_0=1,u_1=b$ and $u_{n+1}=bu_n-cu_{n-1}$.
Thus all $u_n$ are positive
if and only if $b^2\ge 4c$,
a folklore result.

Similarly, let $b,c,d>0$ and
$$\frac{1-dx}{1-bx+cx^2}=\sum_{n\ge 0}u_nx^n.$$
Then $u_0=1,u_1=b-d$ and $u_{n+1}=bu_n-cu_{n-1}$.
Thus all $u_n$ are positive
if and only if $b^2\ge 4c$ and %$b+\sqrt{b^2-4c}\ge 2d$.
%$b-d\ge \la_1=(b-\sqrt{b^2-4c})/2$,
$d\le (b+\sqrt{b^2-4c})/2$.
%This result has occurred in \cite[Proposition 2.6]{CLW15ra}.
%and the proof there is not so natural.
\end{exm}

%\section{Applications of Theorem \ref{sc-pp}}

\begin{exm}\label{dtc-exm}
The question of determining whether Taylor coefficients of a given rational function are all positive,
%goes back to Szeg\"o (1933) and has since been investigated by many authors.
has been investigated by many authors \cite{Ask74,AG72,Kau07,Pil19,SS14,Str08,SZ15}.
In order to show the positivity of the rational functions,
it is necessary even suffices to prove that diagonal Taylor coefficients are positive.
The diagonal coefficients of some important rational functions
are arithmetically interesting sequences and satisfy three-term recurrence relations.
Straub and Zudilin \cite{SZ15} showed that these diagonal coefficients are positive
by expressed them in terms of known hypergeometric summations.
Here we show their positivity from the viewpoint of three-term recurrence sequences.
%in a unified approach.

(1)\quad
Consider the rational function
\begin{equation}\label{exm-a}
\frac{1}{1-(x+y)+axy}=\sum_{n,m\ge 0}u_{n,m}x^ny^m.
\end{equation}
The diagonal terms $u_n:=u_{n,n}$ of the Taylor expansion
satisfy the recurrence relation
$$(n+1)u_{n+1}=(2-a)(2n+1)u_n-a^2nu_{n-1}$$
with $u_0=1$ and $u_1=2-a$.
The characteristic function $Q(\la)=\la^2-2(2-a)\la+a^2$ and the discriminant $\Delta=16(1-a)$.
If $(u_n)$ is positive, then $\Delta\ge 0$ by Theorem \ref{nc-pp}, i.e., $a\le 1$.
Conversely, if $a\le 1$, then $\la_1=2-a-2\sqrt{1-a}$.
Clearly, $u_1=2-a\ge\la_1=\la_1 u_0$ and
$Q_n(\la_1)%=(n+1)\la_1^2-(2-a)(2n+1)\la_1+a^2n=Q(\la_1)n+\la_1[\la_1-(2-a)]
=\la_1[\la_1-(2-a)]\le 0$.
%The positivity of $(u_n)_{n\ge 0}$ follows from Theorem \ref{sc-pp} by taking $\la_0=\la_1$.
It follows that $(u_n)_{n\ge 0}$ is positive from Theorems \ref{sc-pp} by taking $\la_0=\la_1$. %Corollary \ref{1-rr}.
Thus we conclude that $(u_n)_{n\ge 0}$ is positive if and only if $a\le 1$.
It is also known that
$$u_n=\sum_{k=0}^{n}\frac{(2n-k)!}{k!(n-k)!^2}(-a)^k.$$
The positivity is not apparent when $0<a\le 1$.
%For $a>1$, the quadratic polynomial $1-2(2-a)z+a^2z^2$ has non-real roots,
%and so that $u_n$ is (eventually) sign-indefinite.
%Therefore, the series (9) is positive if and only if its diagonal terms are positive.
%It follows that all $u_n>0 \iff a\le 1$.
%Thus the rational function is positive iff its diagonal terms are positive.

%On the other hand,
Straub \cite[Proposition 4]{Str08} showed that $u_{n,m}$ in \eqref{exm-a} are all positive if and only if $a\le 1$.
In other words, the rational function \eqref{exm-a} is positive if and only if its diagonal terms are positive.

(2)\quad
Consider the Szeg\"o rational function
$$S(x,y,z)=\dfrac{1}{1-(x+y+z)+\frac{3}{4}(xy+yz+zx)}.$$
Denote the diagonal terms
$s_n=[(xyz)^n]S(2x,2y,2z)$.
It is known that
$$s_n=\sum_{k=0}^{n}(-27)^{n-k}2^{2k-n}\frac{(3k)!}{k!^3}\binom{k}{n-k},$$
the positivity is not apparent here.
On the other hand, the diagonal terms satisfy the three-term recurrence relation
$$2(n+1)^2s_{n+1}=3(27n^2+27n+8)s_n-81(3n-1)(3n+1)s_{n-1},$$
with $s_0=1, s_1=12$ and $s_2=198$.
The characteristic equation $2\la^2-81\la+729=0$ has two roots
$\la_1=27/2$ and $\la_2=27$.
Also, $s_2>\la_1 s_1$ and
$$Q_n(\la_1)=2(2n+1)\la_1^2-3(27n+8)\la_1-81=-\frac{729}{2}n-\frac{81}{2} %=-\frac{729}{2}(n-1)-405\le Q_1(\la_1)
<0$$
for $n\ge 1$.
The positivity of $(s_n)_{n\ge 1}$ follows from Theorem \ref{sc-pp} by taking $\la_0=\la_1$.
Thus the total sequence $(s_n)_{n\ge 0}$ is positive.

(3)\quad
Consider the Lewy-Askey rational function
$$h(x,y,z,w)=\dfrac{1}{1-(x+y+z+w)+\frac{2}{3}(xy+xz+xw+yz+yw+zw)}.$$
Let $t_n=9^n[(xyzw)^n]h(x,y,z,w)$ and $t_n=\binom{2n}{n}h_n$.
Then $h_0=1,h_1=24$ and
$$3(n+1)^2h_{n+1}=4(28n^2+28n+9)h_n-64(4n-1)(4n+1)h_{n-1}.$$
The characteristic equation $3\la^2-112\la+1024=0$ has two roots
$\la_1=16$ and $\la_2=64/3$.
Also,
$$Q_n(\la_1)=3(2n+1)\la_1^2-4(28n+9)\la_1-64=-256(n-1)-128<0$$
for $n\ge 1$, and $h_1>\la_1 h_0$.
The positivity of $(h_n)_{n\ge 0}$ follows from Theorem \ref{sc-pp} by taking $\la_0=\la_1$.

(4)\quad
Consider the Kauers-Zeilberger rational function
$$D(x,y,z,w)=\dfrac{1}{1-(x+y+z+w)+2(xyz+xyw+xzw+yzw)+4xyzw}.$$
Let $d_n=[(xyzw)^n]D(x,y,z,w)$.
Then
$$(n+1)^3d_{n+1}=4(2n+1)(3n^2+3n+1)d_n-16n^3d_{n-1}$$
with $d_0=1$ and $d_1=4$.
The characteristic equation $\la^2-24\la+16=0$ has two roots
$\la_1=12-8\sqrt{2}<1<\la_2=12+8\sqrt{2}$.
The positivity of $(d_n)_{n\ge 0}$ follows from Theorem \ref{sc-pp} by taking $\la_0=1$
since $b(n)\ge a(n)+c(n)$.
\end{exm}

\begin{rem}\label{aln}
%In his remarkable proof of the irrationality of $\zeta(3)=\sum_{n\ge 1}1/n^3$,
The Ap\'ery numbers
$$A_n=\sum_{k=0}^n \binom{n}{k}^2\binom{n+k}{k}^2$$
play an important role in Ap\'ery's proof of the irrationality of $\zeta(3)=\sum_{n\ge 1}1/n^3$.
The Ap\'ery numbers are diagonal Taylor coefficients of the rational function
$$%R(x,y,z,w)=
\frac{1}{1-(xyzw+xyw+xy+xz+zw+y+z)}$$
and satisfy the three-term recurrence relation
\begin{equation}\label{a-33}
(n+1)^3A_{n+1}=(2n+1)(17n^2+17n+5)A_n-n^3A_{n-1}.
\end{equation}
We refer the reader to \cite[A005259]{Slo} and references therein for the Ap\'ery numbers.
The Ap\'ery numbers are closely related to modula forms or supercongruences %and the $p$-adic Gamma function (ZWS2010)
and have been generalized to various Ap\'ery-like numbers,
which satisfy three-term recurrence relations similar to \eqref{a-33}
(see \cite{Coo12,MS16} for instance).
%For example,
The diagonal terms $s_n,h_n$ and $d_n$ in Example \ref{dtc-exm} are all Ap\'ery-like numbers.
%It is known that the sequence $(A_n)_{n\ge 0}$ is log-convex. (?)
%We next consider the positivity and log-convexity of Ap\'ery-like numbers. %defined in Malik and Straub \cite{MS16}.
Not all Ap\'ery-like numbers are positive.
For example, consider the Ap\'ery-like numbers $(u_n)_{n\ge 0}$ defined by
$$u_n=\sum_{k=0}^{\lrf{n/3}}
(-1)^k3^{n-3k}\binom{n}{3k}\frac{(3k)!}{k!^3},$$
which are diagonal Taylor coefficients of the rational function
$$\frac{1}{1+x^3+y^3+z^3-3xyz}$$ %=\sum_{n,m\ge 0}u_{n,m}x^ny^m$$
and satisfy the recurrence relation
$$(n+1)^2u_{n+1}=(9n^2+9n+3)u_n-27n^2u_{n-1}.$$%\qquad\text{(A006077)}.$$
See \cite[A006077]{Slo} and references therein.
Note that the discriminant of the characteristic equation $\la^2-9\la+27=0$ is negative.
%$\Delta=9^2-4\cdot 27<0$.
Hence the sequence $(u_n)$ is oscillatory by Theorem \ref{nc-pp} (i). %(eventually) sign-indefinite.
\end{rem}

A sequence $(u_n)_{n\ge 0}$ of positive numbers is said to be {\it log-convex}
if $u_{n-1}u_{n+1}\ge u_n^2$ for all $n\ge 1$.
The log-convexity of combinatorial sequences have been extensively investigated
(see \cite{LW07lcx} for instance).
Here we present a new criterion,
which can be simultaneously used for the positivity and log-convexity of three-term recurrence sequences.

Denote
$$B(n)=\left|
        \begin{array}{cc}
          b(n+1) & b(n)\\
          a(n+1) & a(n)\\
        \end{array}
      \right|=Bn^{2\delta-2}+\cdots,\quad
C(n)=\left|
        \begin{array}{cc}
          c(n+1) & c(n)\\
          a(n+1) & a(n)\\
        \end{array}
      \right|=Cn^{2\delta-2}+\cdots
$$
where
$$B=\left|\begin{array}{cc}
b & b'\\
a & a'\\
\end{array}
\right|=ba'-b'a,\quad
C=\left|\begin{array}{cc}
c & c'\\
a & a'\\
\end{array}
\right|=ca'-c'a.$$

  %We refer the reader to \cite[Theorem 3.10]{LW07lcx} for the special case $\delta=1$ in Theorem \ref{lcx}.
%The special case $\delta=1$ of Theorem \ref{lcx} has occurred in \cite[Theorem 3.10]{LW07lcx}.
  %The result of the special case $\delta=1$ in Theorem \ref{lcx} has occurred in \cite[Theorem 3.10]{LW07lcx}.

\begin{prop}[Log-convexity]
\label{lcx}
Let $(u_n)_{n\ge0}$ be a sequence satisfying the recurrence relation (\ref{u-rr}).
%Suppose that $B,C>0$ and $CB(n)\ge BC(n)\ge 0$ for $n\ge 1$.
%Let $\la_0=C/B$.
%Assume that $u_1\ge\la_0 u_0$ and $Q_n(\la_0)\le 0$ for $n\ge 1$.
%If $(u_0,u_1,u_2)$ is positive and log-convex,
%then so is the total sequence $(u_n)_{n\ge 0}$.
  %Assume that there exists some $m$ such that $u_m,u_{m+1}, u_{m+2}$ are positive and $u_{m+2}/u_{m+1}\ge u_{m+1}/u_m\ge\la_0$
  %and $Q_n(\la_0)\le 0$ for $n\ge m$.
  %Then the sequence $(u_n)_{n\ge m}$ is positive and log-convex.
Suppose that $B,C>0$ and let $\la_0=C/B$.
\begin{enumerate}[\rm (i)]
  \item Assume that $u_1\ge\la_0 u_0>0$ and $Q_n(\la_0)\le 0$ for $n\ge 1$.
  Then the sequence $(u_n)_{n\ge 0}$ is positive.
  \item Assume that the sequence $(u_n)_{n\ge 0}$ is positive and $CB(n)\ge BC(n)\ge 0$ for $n\ge 1$.
  If $u_{2}/u_{1}\ge u_{1}/u_0\ge\la_0$,
  then the sequence $(u_n)_{n\ge 0}$ is log-convex.
\end{enumerate}
\end{prop}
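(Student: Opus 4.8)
The plan is to handle the two parts separately. Part~(i) is an immediate consequence of Theorem~\ref{sc-pp}: since $B,C>0$, the number $\la_0=C/B$ is positive, and the assumptions $u_1\ge\la_0u_0>0$ and $Q_n(\la_0)\le 0$ for $n\ge 1$ are exactly the hypotheses of Theorem~\ref{sc-pp} with $m=0$ (whose proof only uses $Q_n(\la_0)\le 0$ for $n\ge 1$). Hence $(u_n)_{n\ge 0}$ is positive, and nothing more is needed.

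For part~(ii), assume $(u_n)_{n\ge 0}$ is positive and set $r_n:=u_{n+1}/u_n>0$ for $n\ge 0$. Because all $u_n>0$, log-convexity of $(u_n)$ is equivalent to $(r_n)_{n\ge 0}$ being nondecreasing, so it suffices to prove $r_n\ge r_{n-1}$ for all $n\ge 1$. Dividing \eqref{u-rr} by $u_n$ gives
$$r_n=\frac{b(n)}{a(n)}-\frac{c(n)}{a(n)\,r_{n-1}}=:f_n(r_{n-1}),$$
where each $f_n$ is strictly increasing on $(0,\infty)$. The computational heart of the argument is the identity
$$f_n(x)-f_{n-1}(x)=\frac{1}{a(n)\,a(n-1)}\left(B(n-1)-\frac{C(n-1)}{x}\right),\qquad x>0,$$
obtained by writing $b(n)/a(n)-b(n-1)/a(n-1)$ and $c(n)/a(n)-c(n-1)/a(n-1)$ over a common denominator and recognizing the numerators as $B(n-1)$ and $C(n-1)$.

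I would then prove part~(ii) by strong induction, establishing simultaneously that $r_n\ge\la_0$ for all $n\ge 0$ and $r_n\ge r_{n-1}$ for all $n\ge 1$. The base cases $n=0$ and $n=1$ are precisely the hypotheses $r_0=u_1/u_0\ge\la_0$ and $r_1=u_2/u_1\ge u_1/u_0=r_0\ge\la_0$. For $n\ge 2$ one decomposes
$$r_n-r_{n-1}=\bigl(f_n(r_{n-1})-f_n(r_{n-2})\bigr)+\bigl(f_n(r_{n-2})-f_{n-1}(r_{n-2})\bigr).$$
The first bracket is $\ge 0$ because $f_n$ is increasing and $r_{n-1}\ge r_{n-2}$ by the inductive hypothesis. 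For the second bracket, the identity above evaluates it to $\frac{1}{a(n)a(n-1)}\bigl(B(n-1)-C(n-1)/r_{n-2}\bigr)$, which is $\ge 0$ since $C(n-1)\ge 0$ (from $BC(n-1)\ge 0$ and $B>0$), $r_{n-2}\ge\la_0=C/B$, and $CB(n-1)\ge BC(n-1)$ forces $B(n-1)\ge\frac{B}{C}\,C(n-1)=C(n-1)/\la_0\ge C(n-1)/r_{n-2}$. Thus $r_n\ge r_{n-1}\ge\la_0$, closing the induction. Note that $CB(n)\ge BC(n)\ge 0$ is used only at index $n-1\ge 1$, so the step $n=1$ must indeed be supplied separately, as it is by the hypothesis $u_2/u_1\ge u_1/u_0$.

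The only genuine obstacle lies in part~(ii), and it is conceptual rather than computational: plain monotonicity of $(r_n)$ does not propagate through the induction by itself, since the sign of the drift term $f_n(r_{n-2})-f_{n-1}(r_{n-2})$ depends on $r_{n-2}$ being at least $\la_0$. The insight is to strengthen the inductive statement to carry this lower bound, and to observe that $\la_0=C/B$ is exactly the threshold that makes the drift term nonnegative; once the identity for $f_n-f_{n-1}$ is in place, the rest is routine algebra.
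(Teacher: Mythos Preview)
Your proof is correct and follows essentially the same approach as the paper: part~(i) is reduced to Theorem~\ref{sc-pp}, and part~(ii) proceeds by induction on the ratios $r_n=u_{n+1}/u_n$, carrying the auxiliary lower bound $r_n\ge\la_0$ and splitting $r_n-r_{n-1}$ into a monotonicity piece and a drift piece controlled by $B(\cdot),C(\cdot)$. The only cosmetic difference is that the paper evaluates the drift $f_{n+1}-f_n$ at $x_n$ (so the terms $B(n),C(n)$ appear), whereas you evaluate it at $r_{n-2}$ (so $B(n-1),C(n-1)$ appear); both orderings give the same conclusion.
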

\begin{proof}%[\bf Proof of Theorem \ref{lcx}]
(i)\quad
The positivity of $(u_n)_{n\ge 0}$ is obvious by Theorem \ref{sc-pp}.

(ii)\quad
Let $x_n={u_{n+1}}/{u_n}$ for $n\ge 0$.
Then $(u_n)_{n\ge 0}$ is log-convex if and only if $(x_n)_{n\ge 0}$ is nondecreasing.
We next show that $x_{n+1}\ge x_{n}\ge\la_0$ for $n\ge 0$.
We proceed by induction on $n$.
Clearly, $x_1\ge x_0\ge\la_0$. %by the condition. %$u_0u_2\ge u_1^2$ and $u_1\ge\la_0u_0$.
Assume now that $x_n\ge x_{n-1}\ge\la_0$.
We need to show that $x_{n+1}\ge x_n\ge\la_0$.

By the recurrence relation \eqref{u-rr}, we have
\begin{equation}\label{xn-rr0}
x_n=\frac{b(n)}{a(n)}-\frac{c(n)}{a(n)}\frac{1}{x_{n-1}}.
\end{equation}
Thus
\begin{eqnarray}\label{xn1}
x_{n+1}-x_n
&=& \left[\left(\frac{b(n+1)}{a(n+1)}-\frac{b(n)}{a(n)}\right)
- \left( \frac{c(n+1)}{a(n+1)}-\frac{c(n)}{a(n)}\right) \frac{1}{x_n}\right]
+ \frac{c(n)}{a(n)} \left( \frac{1}{x_{n-1}}-\frac{1}{x_n} \right)\nonumber\\
&=& \frac{B(n)x_n-C(n)}{a(n+1)a(n)x_n}+\frac{c(n)}{a(n)} \left( \frac{1}{x_{n-1}}-\frac{1}{x_n} \right).
\end{eqnarray}
By the assumption $x_n\ge\la_0$ and the condition $CB(n)\ge BC(n)$, %and $\la_0=C/B$,
we obtain $B(n)x_n\ge B(n)\la_0\ge C(n)$.
It follows from \eqref{xn1} that $x_{n+1}\ge x_n$, as required.
Thus the sequence $(x_n)_{n\ge 0}$ is nondecreasing,
and the sequence $(u_n)_{n\ge 0}$ is therefore log-convex.
\end{proof}

By means of Proposition \ref{lcx},
we may prove that the log-convexity of the diagonal terms $s_n,h_n,d_n$ in Example \ref{dtc-exm},
as well as the Ap\'ery numbers $A_n$.
We omit the proofs for brevity.
Instead
we give a somewhat more complex example to illustrate Proposition \ref{lcx}.
%we illustrate Theorem \ref{lcx} in a.
%We can also investigate the log-convexity of Ap\'ery-like numbers.
%We refer the reader to \cite{AZ06,Coo12,MS16} for Ap\'ery-like numbers.

\begin{exm}
Consider the Ap\'ery-like numbers $(u_n)_{n\ge 0}$ defined by
\begin{equation}\label{s18}
(n+1)^3u_{n+1}=(2n+1)(14n^2+14n+6)u_n-n(192n^2-12)u_{n-1}
\end{equation}
with $u_0=1$ and $u_1=6$. %$u_2=54,u_3=564,u_4=6390,u_5=76356,u_6=948276$.
%Now $\la^2-28\la+192=0$ has two roots
%$\la_1=12$ and $\la_2=16$, and
%$$Q_n(\la_1) %=-84n^2+120n+72=-12(7n^2-10n-6)
%=-72n^2+108n+72=-36(n-2)(2n+1)\le 0$$
%for $n\ge 2$.
%Also, $u_6>12u_5$.
Such Ap\'ery-like numbers are introduced by Cooper in \cite{Coo12}.
It is known that
$$u_n= %s_{18}(n)=
\sum_{k=0}^{\lrf{n/3}}
(-1)^k\binom{n}{k}\binom{2k}{k}\binom{2(n-k)}{n-k}\left[\binom{2n-3k-1}{n}+\binom{2n-3k}{n}\right].$$
%The positivity is not apparent here
%but can be obtained by means of Theorem \ref{sc-pp}.
We next apply Proposition \ref{lcx} to obtain the positivity and log-convexity simultaneously.

We have
$$B(n)=42n^4+200n^3+330n^2+220n+54$$
and
$$C(n)=576n^4+2328n^3+2952n^2+1200n+180.$$
Hence $B=42, C=576$ and
$$CB(n)-BC(n)=17424n^3+66096n^2+76320n+23544$$
for $n\ge 1$.
On the other hand,
$\la_0=C/B=96/7$ and
$$Q_n(\la_0)=\frac{12}{49}(-16n^3-48n^2+799n+432)<0$$
for $n\ge 7$.
Also, $u_{12}/u_{11}\ge u_{11}/u_{10}>\la_0$.
The sequence $(u_n)_{n\ge 10}$ is therefore positive and log-convex by Proposition \ref{lcx}.
It is not difficult to check that $(u_n)_{0\le n\le 11}$ is also positive and log-convex.
Thus the total sequence $(u_n)_{n\ge 0}$ is positive and log-convex.
\end{exm}

%\begin{rem}
We also refer the interested reader to
\cite{XY13} for the log-convexity of three-term recursive sequences and
\cite{HZ19} for the asymptotic log-convexity of $P$-recursive sequences.
%\end{rem}

\section{Concluding remarks and further work}

We have seen that
if $b^2<4ac$,
then the difference equation \eqref{u-rr} is oscillatory;
and if $b^2>4ac$,
then the difference equation \eqref{u-rr} is nonoscillatory. %eventually sign-definite.
%For the case $b^2=4ac$,
%the difference equation \eqref{u-rr} may be either eventually sign-definite or oscillatory.
%(The general recursive formula for ? is too complicated to be included here. Page 378)
In the case $b^2=4ac$,
the asymptotic behavior of solutions of the second-order difference equations can be very complicated.
%too complicated to be included here.
%We refer the interested reader to %\cite{BT33,Ela05,WZ85,WL92a,WL92b}
%\cite{WL92a,WL92b} for further information.
The interested reader is referred to Wong and Li \cite{WL92a,WL92b}.
Here we illustrate that the difference equation \eqref{u-rr} may be either oscillatory or nonoscillatory.
%For example, every solution of the difference equation $u_{n+1}=2u_n-u_{n-1}$ is eventually sign-definite
%since $u_n=n(u_1-u_0)+u_0$.
%On the other hand, the difference equation
%$nu_{n+1}=(2n-1)u_n-nu_{n-1}$ is oscillatory \cite[Exercise 7.1.3]{Ela05}.

\begin{exm}\label{D=0}
Consider the difference equation
\begin{equation}\label{Lnx}
(n+1)L_{n+1}(x)=(2n+1-x)L_n(x)-nL_{n-1}(x).
\end{equation}
Clearly, the corresponding discriminant $b^2-4ac=0$.

When $x=0$, we have $(n+1)L_{n+1}(0)=(2n+1)L_n(0)-nL_{n-1}(0)$.
Every solution of this difference equation is nonoscillatory.
Actually, solve the difference equation to obtain
\begin{equation}\label{Ln0}
L_n(0)=\left(1+\frac{1}{2}+\cdots+\frac{1}{n}\right)(L_1(0)-L_0(0))+L_0(0).
\end{equation}
Recall that $1+\frac{1}{2}+\cdots+\frac{1}{n}\sim \ln n+\gamma$,
where $\gamma$ is the Euler constant.
Hence if $L_1(0)<L_0(0)$, then $L_n(0)$ is eventually negative;
if $L_1(0)=L_0(0)$, then $L_n(0)$ are identically equal to $L_0(0)$; %for all $n$.
and if $L_1(0)>L_0(0)$, then $L_n(0)$ is eventually positive.
In case of positive,
it immediately follows from \eqref{Ln0} that the sequence $(L_n(0))$ is concave, and therefore log-concave.

%Recall that $$1+\frac{1}{2}+\cdots+\frac{1}{n}\sim\ln n+\gamma,$$
%where $\gamma$ is the Euler constant.
%If $L_1(0)>L_0(0)$,
%then the sequence $L_n(0)$ is (asymptotically) concave, and is therefore log-concave.

When $x=1$, we have
\begin{equation}\label{Ln1-rr}
(n+1)L_{n+1}(1)=2nL_n(1)-nL_{n-1}(1).
\end{equation}
We next show that every solution of the difference equation \eqref{Ln1-rr} is oscillatory.

Suppose the contrary and let $L_n$ be an eventually positive solution of \eqref{Ln1-rr}.
We may assume, without loss of generality, that
$L_n>0$ for all $n\ge 0$.
Let $x_n=L_{n+1}/L_n$ for $n\ge 0$.
Then
\begin{equation}\label{xn-rr}
  x_n=\frac{2n}{n+1}-\frac{n}{n+1}\frac{1}{x_{n-1}}=\frac{n}{n+1}\left(2-\frac{1}{x_{n-1}}\right).
\end{equation}
Note that $x_1=1-\frac{1}{2x_0}<1$.
Assume that $x_{n-1}<1$.
Then $x_n<\frac{2n}{n+1}-\frac{n}{n+1}=\frac{n}{n+1}<1$ by \eqref{xn-rr}.
Thus $x_n<1$ for all $n\ge 1$.
On the other hand, since $a+1/a\ge 2$ for $a>0$, we have
$$x_n=\frac{n}{n+1}\left(2-\frac{1}{x_{n-1}}\right)\le\frac{n}{n+1}x_{n-1}<x_{n-1}.$$
The sequence $(x_n)_{n\ge 1}$ is therefore decreasing.
Thus the sequence $(x_n)$ is convergence.
Let $x_n\rightarrow x$.
Then $x=2-1/x$ by \eqref{xn-rr}, and so $x=1$.
On the other hand, $x<x_1<1$ since $(x_n)$ is decreasing,
which leads to a contradiction.

%It is well known that %signless
The classic Laguerre polynomials
$$L^{(0)}_n(x)=\sum_{k=0}^n(-1)^k\binom{n}{k}\frac{x^k}{k!}$$
satisfy the recurrence relation \eqref{Lnx}
with $L^{(0)}_0(x)=1$ and $L^{(0)}_1(x)=1-x$.
%It is well known that the Laguerre polynomials satisfy the Turan inequality:
%$$L^2_n(x)-L_{n-1}(x)L_{n+1}(x)\ge 0$$
%for $x\in\mathbb{R}$ and $n\ge 1$.
It is well known that
%$$L^{(0)}_n(1)=\pi^{-1/2}e^{1/2}n^{-1/4}\cos\left(2n^{1/2}-1/4\right)+O\left(n^{-3/4}\right).$$
$$L^{(0)}_n(x)=\pi^{-1/2}e^{x/2}(nx)^{-1/4}\cos\left(2(nx)^{1/2}-1/4\right)+O\left(n^{-3/4}\right)$$
(see \cite[Example 8.38]{Ela05} for instance).
This can also explain why $L^{(0)}_n(1)$ is oscillatory.
\end{exm}

For the difference equation \eqref{u-rr} with $b^2=4ac$,
we feel that either all solutions are oscillatory or
all solutions are nonoscillatory.
However, we can not prove it.

%We end this paper by proposing the following problem.
We have seen from Theorem \ref{nc-pp} (ii) that
if the difference equation \eqref{u-rr} has a positive solution,
then it has a positive and minimal solution $u^*_n$.
%By Poincar\'e Theorem,
%Our last question is under what conditions,
We conjecture that the solution $u^*_n$ is log-convex
and the ratio $u^*_{n+1}/u^*_n$ converges to the smaller characteristic root $\la_1$.

\section*{Acknowledgement}

This work was partially supported by the National Natural Science Foundation of China (Nos. 11771065, 12171068).

\section*{References}


\begin{thebibliography}{99}
%\bibitem{AZ06}G. Almkvist and W. Zudilin,
%Differential equations, mirror maps and zeta values,
%In: Mirror symmetry. V, volume 38 of AMS/IP Stud. Adv. Math., pp. 481--515.
%Amer. Math. Soc., Providence, RI, 2006.
\bibitem{Ask74}R. Askey,
Certain rational functions whose power series have positive coefficients. II,
SIAM J. Math. Anal. 5 (1974) 53--57.
\bibitem{AG72}R. Askey and G. Gasper,
Certain rational functions whose power series have positive coefficients,
Amer. Math. Monthly 79 (1972) 327--341.
%\bibitem{Bir11}G.D. Birkhoff,
%General theory of linear difference equations,
%Trans. Amer. Math. Soc. 12 (1911) 243--284.
%\bibitem{Bir30}G.D. Birkhoff,
%Formal theory of irregular linear difference equations,
%Acta Math. 54 (1930)  205--246.
%\bibitem{BT33}G.D. Birkhoff and W.J. Trjitzinsky,
%Analytic theory of singular difference equations,
%Acta Math. 60 (1933) 1--89.
%\bibitem{Bre95}F. Brenti,
%Combinatorics and total positivity,
%J. Comb. Theory Ser. A 71 (1995) 175--218.
\bibitem{CLW15ra}X. Chen, H. Liang and Y. Wang,
Total positivity of Riordan arrays,
European J. Combin. 46 (2015) 68--74.
\bibitem{CLW15rm}X. Chen, H. Liang and Y. Wang,
Total positivity of recursive matrices,
Linear Algebra Appl. 471 (2015) 383--393.
%\bibitem{CW19}X. Chen and Y. Wang,
%Notes on the total positivity of Riordan arrays,
%Linear Algebra Appl. 569 (2019) 156--161.
\bibitem{Coo12}S. Cooper,
Sporadic sequences, modular forms and new series for $1/\pi$,
Ramanujan J. 29(1-3) (2012) 163--183.
\bibitem{Ela05}S. Elaydi,
An Introduction to Difference Equations, 3rd ed.,
Springer, New York, 2005.
\bibitem{FJ11}S.M. Fallat and C.R. Johnson,
Totally Nonnegative Matrices, Princeton University Press, 2011.
\bibitem{HHH06}V. Halava, T. Harju and M. Hirvensalo,
Positivity of second order linear recurrent sequences,
Discrete Appl. Math. 154 (2006) 447--451.
\bibitem{HZ19}Q.-H. Hou and Z.-R. Zhang,
Asymptotic $r$-log-convexity and $P$-recursive sequences,
J. Symbolic Comput. 93 (2019) 21--33.
%\bibitem{HL21}Q.-H. Hou and G. Li,
%Log-concavity of $P$-recursive sequences,
%J. Symbolic Comput. 107 (2021) 251--268.
\bibitem{Kar68}S. Karlin,
Total Positivity, Volume 1,
Stanford University Press, 1968.
\bibitem{Kau07}M. Kauers,
Computer algebra and power series with positive coefficients,
in: Proceedings of FPSAC'07, 2007, pp. 1--7.
\bibitem{LMW16}H. Liang, L. Mu and Y. Wang,
Catalan-like numbers and Stieltjes moment sequences,
Discrete Math. 339 (2016) 484--488.
\bibitem{Liu10}L. L. Liu,
Positivity of three-term recurrence sequences,
Electron. J. Combin. 17 (2010) 1--9.
\bibitem{LW07lcx}L. L. Liu and Y. Wang,
On the log-convexity of combinatorial sequences,
Adv. in Appl. Math. 39 (2007) 453--476.
\bibitem{MS16}A. Malik and A. Straub,
Divisibility properties of sporadic Ap\'ery-like numbers,
Res. Number Theory 2 (2016), Paper No. 5, 26 pp.
%Article ID: 5, 26.
\bibitem{Min88}H. Minc,
Nonnegative Matrices,
John Wiley \& Sons, New York, 1988.
\bibitem{Pil19}V. Pillwein,
On the positivity of the Gillis-Reznick-Zeilberger rational function,
Adv. in Appl. Math. 104 (2019) 75--84.
\bibitem{Pin10}A. Pinkus,
Totally Positive Matrices,
Cambridge University Press, Cambridge, 2010.
\bibitem{SS14}A.D. Scott and A.D. Sokal,
Complete monotonicity for inverse powers of some combinatorially defined polynomials,
Acta Math. 213 (2) (2014) 323--392.
\bibitem{Slo}N.J.A. Sloane,
The On-Line Encyclopedia of Integer Sequences,
Available at https://oeis.org.
\bibitem{Str08}A. Straub,
Positivity of Szeg\"o's rational function,
Adv. in Appl. Math. 41 (2008) 255--264.
\bibitem{SZ15}A. Straub and W. Zudilin,
Positivity of rational functions and their diagonals,
J. Approx. Theory 195 (2015) 57--69.
%\bibitem{Wal48}H.S. Wall,
%Analytic Theory of Continued Fractions,
%D. Van Nostrand Company, Inc., New York, N. Y., 1948.
\bibitem{WZ16}Y. Wang and B.-X. Zhu,
Log-convex and Stieltjes moment sequences,
Adv. in Appl. Math. 81 (2016) 115--127.
%\bibitem{WZ85}J. Wimp and D. Zeilberger,
%Resurrecting the asymptotics of linear recurrences,
%J. Math. Anal. Appl. 111 (1985) 162--176.
\bibitem{WL92a}R. Wong and H. Li,
Asymptotic expansions for second-order linear difference equations,
Asymptotic methods in analysis and combinatorics,
J. Comput. Appl. Math. 41 (1992) 65--94.
\bibitem{WL92b}R. Wong and H. Li,
Asymptotic expansions for second-order linear difference equations. II,
Stud. Appl. Math. 87 (1992) 289--324.
\bibitem{XY11}E.X.W. Xia and O.X.M. Yao,
The signs of three-term recurrence sequences,
Discrete Appl. Math. 159 (2011) 2290--2296.
\bibitem{XY13}E.X.W. Xia and O.X.M. Yao,
A criterion for the log-convexity of combinatorial sequences,
Electron. J. Combin. 20 (2013), \#P3.
\end{thebibliography}
\end{document}